\newcommand{\sco}{,\ldots,}
\newcommand{\NN}{\mathbb{N}}
\newcommand{\ZZ}{\mathbb{Z}}
\newcommand{\QQ}{\mathbb{Q}}
\newcommand{\PrA}{\mathop{\mathbf{PrA}}\nolimits}
\newcommand{\PA}{\mathop{\mathbf{PA}}\nolimits}
\newcommand{\ZF}{\mathop{\mathbf{ZF}}\nolimits}
\newcommand{\Th}{\mathop{\mathbf{Th}}\nolimits}
\renewcommand{\epsilon}{\varepsilon}
\renewcommand{\phi}{\varphi}
\newcommand{\sref}[2]{\hyperref[#2]{#1 \ref*{#2}}}
\newcommand{\dref}[2]{\hyperref[#2]{ #1 }}
\newcommand{\Lc}{\mathcal{L}}
\newcommand{\eqdef}{\stackrel{\mbox{\tiny\rm def}}{=}}\renewcommand{\to}{\rightarrow}
\newcommand{\ra}{\rightarrow}
\newcommand{\Ra}{\Rightarrow}
\spnewtheorem{hyp}{Conjecture}[section]{\bfseries}{\itshape}
\newcommand{\xv}{\overline{x}}
\newcommand{\yv}{\overline{y}}
\newcommand{\av}{\overline{a}}
\newcommand{\bv}{\overline{b}}
\newcommand{\vv}{\overline{v}}
\newcommand{\uv}{\overline{u}}
\newcommand{\cv}{\overline{c}}
\begin{document}
\author{Alexander Zapryagaev
  \and Fedor Pakhomov\thanks{This work is supported by the Russian Science Foundation under grant 16-11-10252.} }
\title{Interpretations of Presburger Arithmetic in Itself}
\institute{Steklov Mathematical Institute of Russian Academy of Sciences, 8, Gubkina Str., Moscow, 119991, Russian Federation}

\maketitle

\begin{abstract}
Presburger arithmetic $\PrA$ is the true theory of natural numbers with addition. We study interpretations of $\PrA$ in itself. We prove that all one-dimensional self-interpretations are definably isomorphic to the identity self-interpretation. 
In order to prove the results we show that all linear orders that are interpretable in $(\mathbb{N},+)$ are scattered orders with the finite Hausdorff rank and that the ranks are bounded in terms of the dimension of the respective interpretations. From our result about self-interpretations of $\PrA$ it follows that $\PrA$ isn't one-dimensionally interpretable in any of its finite subtheories. We note that the latter was conjectured by A.~Visser.
\keywords{Presburger Arithmetic, Interpretations, Scattered Linear Orders}
\end{abstract}

\section{Introduction}
Presburger Arithmetic $\PrA$ is the first-order theory of natural numbers with addition. It was introduced by M.~Presburger in 1929 \cite{presburger}. Presburger Arithmetic is complete, recursively-axiomatizable, and decidable.

The method of interpretations is a standard tool in model theory and in the study of decidability of first-order theories \cite{tarskimostowski,hodges}. An interpretation of a theory $\mathbf{T}$ in a theory $\mathbf{U}$ essentially is a uniform first-order definition of models of $\mathbf{T}$ in models of $\mathbf{U}$ (we present a detailed definition in Section~3). In the paper we study certain questions about interpretability for Presburger Arithmetic that were well-studied in the case of stronger theories like Peano Arithmetic $\PA$. Although, from technical point of view the study of interpretability for Presburger Arithmetic uses completely different methods than the study of interpretability for $\PA$ (see for example \cite{visser}), we show that from interpretation-theoretic point of view, $\PrA$ has certain similarities to strong theories that prove all the instances of mathematical induction in their own language, i.e. $\PA$, Zermelo-Fraenkel set theory $\ZF,$ etc. 

A {\em reflexive} arithmetical theory (\cite[p.\,13]{visser}) is a theory that can prove the consistency of all its finitely axiomatizable subtheories. Peano Arithmetic $\PA$ and Zermelo-Fraenkel set theory $\ZF$ are among well-known reflexive theories. In fact, all sequential theories (very general class of theories similar to $\PA,$ see \cite[III.1(b)]{hajekpudlak}) that prove all instances of induction scheme in their language are reflexive. For sequential theories reflexivity implies that the theory cannot be interpreted in any of its finite subtheories.  A.~Visser have conjectured that this purely interpretational-theoretic property holds for $\PrA$ as well. Note that $\PrA$ satisfies full-induction scheme in its own language but cannot formalize the statements about consistency of formal theories.

The conjecture was studied by J.~Zoethout \cite{jetze}. Note that Presburger Arithmetic, unlike sequential theories, cannot encode tuples of natural numbers by single natural numbers. And hence for interpretations in Presburger Arithmetic it is important whether individual objects are interpreted by individual objects (one-dimensional interpretations) or by tuples of objects of some fixed length $m$ ($m$-dimensional interpretations). Zoethout considered only the case of one-dimensional interpretations and proved that if any one-dimensional interpretation of $\PrA$ in $(\NN,+)$ gives a model that is definably isomorphic to $(\mathbb{N},+)$ then Visser's conjecture holds for one-dimensional interpretations, i.e. there are no one-dimensional interpretations of $\PrA$ in its finite subtheories. In the present paper we show that the following theorem holds and thus prove Visser's conjecture for one-dimensional interpretations:

\begin{theorem}\label{1a}
For any model $\mathfrak{A}$ of $\PrA$  that is one-dimensionally interpreted in the model $(\NN,+)$, (a) $\mathfrak{A}$ is isomorphic to $(\NN,+)$; (b) the isomorphism is definable in $(\NN,+)$.
\end{theorem}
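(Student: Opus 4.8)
The plan is to derive (a) from the theorem on interpretable linear orders together with an elementary fact about nonstandard models of $\PrA$, and to obtain (b) by a counting argument inside $(\NN,+)$.

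For (a), since the standard model of $\PrA$ is unique up to isomorphism and equals $(\NN,+)$, it suffices to show that $\mathfrak{A}$ is standard. The ordering $\le^{\mathfrak{A}}$ is definable in $\mathfrak{A}$ from $+^{\mathfrak{A}}$, so the linear order $(|\mathfrak{A}|,\le^{\mathfrak{A}})$ is again interpretable in $(\NN,+)$ and hence, by the theorem, scattered. On the other hand, I would check that the ordering of \emph{any} nonstandard $\mathfrak{A}\models\PrA$ is non-scattered: consider the nonstandard part of $\mathfrak{A}$ modulo the relation ``$|x-y|$ has a standard bound''. Using that $\PrA$ proves $\forall x\,\exists y\,(x=y+y\vee x=y+y+1)$, one sees that this quotient order has at least two points, is dense, and has neither a least nor a greatest element; hence it embeds $\QQ$, and choosing a representative of each class embeds $\QQ$ into $\le^{\mathfrak{A}}$ --- contradicting scatteredness. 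Therefore $\mathfrak{A}$ is standard and $\mathfrak{A}\cong(\NN,+)$.

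For (b), we now have $\mathfrak{A}\cong(\NN,+)$, and since $(\NN,+)$ is rigid the isomorphism $f\colon(\NN,+)\to\mathfrak{A}$ is unique; I must show its graph is definable in $(\NN,+)$. First I would normalise the interpretation: replacing each class of the interpreted equality relation by its least element (a definable operation), I may assume the universe of $\mathfrak{A}$ is a set $D\subseteq\NN$ --- necessarily eventually periodic --- with $+^{\mathfrak{A}}$ and $\le^{\mathfrak{A}}$ honest Presburger-definable relations on $D$. As $D$ is eventually periodic, the enumeration $e\colon D\to\NN$ of $D$ in the natural order of $\NN$ is Presburger-definable, and so is its inverse; transporting $\le^{\mathfrak{A}}$ along $e$ gives a Presburger-definable linear order $\prec$ on $\NN$ which, $\mathfrak{A}$ being standard, has order type $\omega$. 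Since $f=e^{-1}\circ h^{-1}$ with $h$ the rank function of $\prec$, the whole matter reduces to showing that \emph{the rank function $h(m)=|\{\,j : j\prec m\,\}|$ of a Presburger-definable linear order $\prec$ of type $\omega$ on $\NN$ is Presburger-definable}. Here I would use the semilinear description of $\prec\subseteq\NN^2$: there is a modulus $q$ so that, off a finite set, the truth of $i\prec j$ is a Boolean combination of the conditions $i\equiv r\ (\mathrm{mod}\ q)$, $j\equiv t\ (\mathrm{mod}\ q)$ and finitely many linear inequalities $\lambda i<\mu j+\nu$ (and their variants with $\le$). Because $\prec$ has type $\omega$, each set $\{\,j:j\prec i\,\}$ is \emph{finite}, hence contained in $[0,Ci+O(1)]$, and it decomposes into finitely many pieces: on the ``cone'' pieces --- away from the finitely many breakpoint lines $\mu j=\lambda i$ --- membership depends only on $j\ (\mathrm{mod}\ q)$, so the size of such a piece is the number of elements of a fixed residue class in an interval with endpoints linear in $i$, a Presburger-definable function of $i$; near each breakpoint line, $j$ ranges over an interval of bounded width, so that ``strip'' contributes a quantity eventually periodic in $i$, again Presburger-definable. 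Summing, $h$ coincides on a cofinite set with a Presburger-definable function, and correcting finitely many values preserves definability. This yields (b).

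I expect the genuine work to lie in that last reduction: checking that the cone/strip decomposition of $\{\,j:j\prec i\,\}$ is really uniform in $i$ and that the strip corrections are eventually periodic --- i.e., upgrading the asymptotic identity $h(i)=Ci+O(1)$ to an exact Presburger definition. (Alternatively, once the structure theory of one-dimensionally interpretable linear orders needed for the scatteredness theorem is in hand, the sub-claim should be extractable from it directly.) Part (a), by contrast, is essentially immediate given that theorem.
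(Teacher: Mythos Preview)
Your argument for (a) is essentially the paper's: the interpreted order is scattered by the theorem on interpretable linear orders, while any nonstandard model of $\PrA$ has order type $\NN+\ZZ\cdot A$ with $A$ dense without endpoints (the paper quotes this as a standard fact rather than re-deriving it via halving as you do), so the model must be standard.

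For (b) your route diverges from the paper's. After the same normalisation step (the paper packages this as a separate lemma reducing to a non-relative interpretation with absolute equality), the paper observes that the isomorphism is $a\mapsto|\{z:z<_*a\}|$ and then simply invokes the theorem of Apelt and Schweikardt that counting quantifiers $\exists^{=y}$ do not increase the expressive power of $(\NN,+)$; in one line the definability is done. You instead propose to prove by hand, via a cone/strip decomposition of the semilinear relation $\prec$, that this particular counting function is Presburger-definable. Your sketch is sound---the breakpoint slopes $\lambda/\mu$ are rational, so the strip contributions are indeed eventually periodic in $i$, and the cone contributions count residue classes in intervals with linear endpoints, hence are floors of linear functions---and it has the virtue of being self-contained, but it amounts to reproving the one-variable case of the Apelt--Schweikardt elimination. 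The paper's approach is much shorter at the cost of importing that black box; yours would make the argument independent of it, at the price of the case analysis you correctly flag as the real work. (Your detour through the enumeration $e\colon D\to\NN$ is also unnecessary: you can count $\{d'\in D:d'<^{\mathfrak A}d\}$ directly.)
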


Note that \sref{Theorem}{1a}(a) was established by J.~Zoethout in \cite{jetze}.

We also study whether the generalization of \sref{Theorem}{1a} to multi-dimensional interpretations holds. We prove:

\begin{theorem}\label{lb}
For any $m$ and model $\mathfrak{A}$ of $\PrA$  that is $m$-dimensionally interpreted in $(\NN,+)$, the model $\mathfrak{A}$ is isomorphic to $(\NN,+)$.
\end{theorem}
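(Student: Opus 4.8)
The plan is to reduce \sref{Theorem}{lb} to the structural fact, announced in the introduction, that every linear order interpretable in $(\NN,+)$ is scattered. The starting observation is that $\PrA$ defines its natural linear order by $x\le y\ardef\exists z\,(x+z=y)$, so any model $\mathfrak{A}$ of $\PrA$ carries a linear order $\le^{\mathfrak{A}}$; and if $\mathfrak{A}$ is obtained by an $m$-dimensional interpretation $I$ of $\PrA$ in $(\NN,+)$, then translating the $\PrA$-formula for $\le$ along $I$ exhibits the pure linear order $(|\mathfrak{A}|,\le^{\mathfrak{A}})$ as an $m$-dimensional interpretation in $(\NN,+)$. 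Hence, by the cited result, $(|\mathfrak{A}|,\le^{\mathfrak{A}})$ is a scattered linear order.

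Next I would recall the order-theoretic description of models of $\PrA$. The standard model $(\NN,+)$ has order type $\omega$, a well-order. Any non-standard model $\mathfrak{A}$ decomposes, as a linear order, into the standard part (order type $\omega$) followed by a family of $\ZZ$-blocks, namely the classes of non-standard elements under $a\sim b\ardef |a-b|$ is standard; each block is order-isomorphic to $\ZZ$ because $\PrA$ proves every element other than $0$ has a predecessor. The point to verify is that, when $\mathfrak{A}$ is non-standard, the induced order on this family of blocks is a nonempty dense linear order without endpoints: density follows because for non-standard $a<b$ with $b-a$ non-standard the element $\lfloor (a+b)/2\rfloor$ — available since $\PrA$ proves every element is even or odd — lies in a block strictly between the blocks of $a$ and $b$; and there is neither a largest nor a smallest block, since for non-standard $a$ the elements $2a$ and $\lfloor a/2\rfloor$ witness a strictly larger and a strictly smaller block. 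Therefore the order type of $\mathfrak{A}$ embeds $\QQ$, so $\mathfrak{A}$ is not scattered.

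Putting the two halves together: $(|\mathfrak{A}|,\le^{\mathfrak{A}})$ is scattered, hence $\mathfrak{A}$ cannot be non-standard, hence $\mathfrak{A}\cong(\NN,+)$, which is exactly \sref{Theorem}{lb}. Note that, unlike in \sref{Theorem}{1a}, we assert only abstract isomorphism here, so no definable isomorphism needs to be produced; indeed the reduction above gives no control over definability, which is presumably why the multidimensional statement is weaker.

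The main obstacle is not located in this reduction, which is short, but in the structural theorem on linear orders interpretable in $(\NN,+)$ that it invokes; establishing scatteredness (and the Hausdorff-rank bounds) is where the genuine work of the paper lies, and it must be carried out independently. Within the present argument the only point needing minor care is the density-without-endpoints claim for the block order of a non-standard model of $\PrA$, i.e. that enough ``halving'' is provable in $\PrA$; this is routine from the Euclidean-division facts available in $\PrA$.
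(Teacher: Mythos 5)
Your proposal is correct and follows essentially the same route as the paper: Section~6 derives \sref{Theorem}{lb} by exactly this reduction, combining the scatteredness of all (multi-dimensionally) interpretable linear orders (\sref{Theorem}{ordering}) with the order-type classification of models of $\PrA$ (\sref{Theorem}{models-class}), just as in the proof of \sref{Theorem}{1a}(a). The only difference is that you also sketch a proof of the block-decomposition and density facts behind \sref{Theorem}{models-class}, which the paper simply cites as well known.
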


We don't know whether the isomorphism is always definable in $(\NN,+)$.

In order to prove \sref{Theorem}{lb}, we show that for every $m$ each linear order that is $m$-dimensionally interpretable in $(\mathbb{N},+)$ is \emph{scattered}, i.e. it doesn't contain a dense suborder. Moreover, our construction gives an estimation for Cantor-Bendixson ranks of the orders (a notion of Cantor-Bendixson rank for scattered linear orders goes back to Hausdorff \cite{hausdorff} in order to give more precise estimation we use slightly different notion of $VD_*$-rank from \cite{krs}):

\begin{theorem}\label{ordering}
All linear orders $m$-dimensionally interpretable in $(\NN,+)$ have the $VD_*$-rank at most $m.$
\end{theorem}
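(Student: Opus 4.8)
The plan is to argue by induction on the dimension $m$, reducing an $m$-dimensional interpretation to a $1$-dimensional ``base'' order together with $(m-1)$-dimensional ``fibers'' by means of a definable, order-compatible condensation. First, some preliminary reductions. An $m$-dimensional interpretation of a linear order $(L,\prec)$ in $(\NN,+)$ is given by a definable domain $D\subseteq\NN^m$, a definable equivalence relation $\equiv$ on $D$, and a definable relation inducing $\prec$ on $D/{\equiv}$. Since the lexicographic order on $\NN^m$ is a definable well-order, every nonempty definable subset of $\NN^m$ has a lexicographically least element, so the map sending $\bar x\in D$ to the lex-least element of its $\equiv$-class is definable; replacing $D$ by its image we may assume $\equiv$ is equality. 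Thus it suffices to prove that \emph{every definable linear order $(D,\prec)$ with $D\subseteq\NN^m$ has $VD_*$-rank at most $m$}. I will also use the following facts about $VD_*$-rank, the first of which is essentially the content of \sref{Theorem}{ordering} for $m=1$: (i) any definable linear order on a subset of $\NN$ has $VD_*$-rank at most $1$, because by quantifier elimination the relation $x\prec y$ on a fixed residue class modulo the relevant modulus depends, for $|x-y|$ large, only on the sign of $x-y$, so $(D,\prec)$ is a finite sum of orders of type $\omega$, $\omega^*$, $\zeta$, and finite orders; (ii) $VD_*$-rank is monotone under suborders; (iii) if $\theta$ is an equivalence relation on a linear order $L$ all of whose classes are $\prec$-convex, then $VD_*$-rank$(L)\le VD_*$-rank$(L/\theta)+\sup_{C\in L/\theta}VD_*$-rank$(C)$. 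Facts (i)--(iii) are either in \cite{krs} or routine consequences of the definitions there.

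For the inductive step, assume the bound for dimension $m-1$ and let $(D,\prec)$ be a definable linear order with $D\subseteq\NN^m$. The key lemma is that there is a definable map $\rho:D\to\ZZ$, given on each cell of a semilinear decomposition of $D$ by an affine form in $\bar x$, such that $\bar x\prec\bar y$ implies $\rho(\bar x)\le\rho(\bar y)$. Granting this, each fiber $\rho^{-1}(c)$ is $\prec$-convex; being a definable subset of an affine hyperplane $\{\bar x:\rho(\bar x)=c\}$, which is semilinearly isomorphic to a definable subset of $\NN^{m-1}$, it carries (with the order induced by $\prec$, and with $c$ as a parameter) a definable linear order of dimension $m-1$, hence of $VD_*$-rank at most $m-1$ by the induction hypothesis. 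The quotient $D/\rho$, with the order induced by $\prec$ (well defined by convexity of the fibers), is definably isomorphic to a definable linear order on a subset of $\ZZ$, hence of $\NN$, so by (i) its $VD_*$-rank is at most $1$. Then (iii) yields $VD_*\text{-rank}(D,\prec)\le 1+(m-1)=m$.

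It remains to construct $\rho$, and this is where the main difficulty lies. The idea is to extract a \emph{leading linear form} for $\prec$: by quantifier elimination, after fixing the residues of $\bar x,\bar y$ modulo a suitable modulus and fixing which cells they lie in, the relation $\bar x\prec\bar y$ is a Boolean combination of linear inequalities in the $2m$ variables, and since $\prec$ is total its large-scale behaviour must be governed by the sign of $L(\bar y)-L(\bar x)$ for a single linear form $L$, with ties broken by comparisons involving forms linearly independent of $L$; one then takes $\rho$ to be (an affine adjustment of) $L$. The obstacle is that a priori different cells or residue classes may carry different candidate leading forms, and the lower-dimensional ``exceptional'' regions where the leading behaviour degenerates must be reconciled with the global order. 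I expect the bulk of the work to be this patching step: showing that any two incompatible candidate leading forms would produce inside $D$ a definable dense suborder, or a definable linear order of $VD_*$-rank exceeding $m$ on a set of dimension $<m$, contradicting either scatteredness or the induction hypothesis, and hence that a single monotone $\rho$ with affine pieces can be chosen; the remaining bookkeeping is then as sketched above.
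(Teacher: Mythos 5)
Your overall architecture (reduce to absolute equality via lex-least representatives, induct on $m$, split the order into a rank-$\le 1$ quotient with rank-$\le(m-1)$ convex fibers, and add the ranks) is a genuinely different route from the paper's, and the surrounding bookkeeping --- facts (ii), (iii), the use of \sref{Theorem}{Existe} to view a lower-dimensional fiber as an $(m-1)$-dimensional interpretation --- is fine. But the proof has a genuine gap exactly where you say the ``main difficulty lies'': the existence of a single definable, piecewise-affine, $\prec$-monotone condensation $\rho\colon D\to\ZZ$ whose fibers all have dimension $\le m-1$ is asserted, not proved. The sketch via quantifier elimination does not yet rule out the problematic cases: a candidate leading form may be degenerate (constant, or with a full-dimensional level set) on some full-dimensional cell of the decomposition; different cells may carry genuinely different leading forms whose level sets are interleaved by $\prec$ in a way that no single affine-by-pieces $\rho$ can respect; and the claim that totality of $\prec$ forces a \emph{single} governing linear form per pair of cells is itself unjustified. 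Your proposed repair --- that any incompatibility would produce a dense suborder or a high-rank low-dimensional order --- is circular in flavor, since scatteredness and the rank bound are what is being proved. As written, the argument establishes the theorem only modulo an unproven structural lemma about Presburger-definable linear orders that is at least as hard as the theorem itself. (Fact (i) is also only gestured at, though it is true, being the $m=1$ case.)

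For contrast, the paper's proof shows that no condensation needs to be constructed at all. It argues by contradiction: if $\mathrm{rk}(L,\prec)>m$, there is a chain $\av_0\prec\av_1\prec\cdots$ with $\av_i\not\simeq_m\av_{i+1}$, so the open intervals $L_i=\{\bv\mid \av_i\prec\bv\prec\av_{i+1}\}$ are infinitely many pairwise disjoint definable sets, each of $VD_*$-rank $>m-1$. By the induction hypothesis (applied through \sref{Theorem}{Existe}) each $L_i$ must have dimension exactly $m$, since a lower-dimensional $L_i$ would be $(m-1)$-dimensionally interpretable and hence of rank $\le m-1$. But the $L_i$ form a uniformly definable parametric family ($\yv_1\prec_*\xv\prec_*\yv_2$ with parameters $\av_i,\av_{i+1}$), and \sref{Lemma}{MS} --- proved directly from the fundamental-lattice decomposition of \sref{Theorem}{fund} --- says that a definable family of pairwise disjoint subsets of $\NN^m$ contains only finitely many sets of full dimension $m$. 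This replaces your hard patching step with a soft counting argument about disjoint semilinear sets; if you want to salvage your approach, you would need to supply a complete proof of the existence of $\rho$, which is not routine.
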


Note that since every structure interpretable in $(\NN,+)$ is automatic, the fact that both the $VD_*$ and Hausdorff ranks of any scattered linear order interpretable in $(\mathbb{N},+)$ is finite follows from the results on automatic linear orders by B.~Khoussainov, S.~Rubin, and F.~Stephan \cite{krs}.


The work is organized as follows. Section 2 introduces the basic notions. In Section 3 we give the definitions of non-parametric interpretations and definable isomorphism of interpretations. In Section 4 we define the dimension of Presburger sets and prove \sref{Theorem}{ordering}. In Section 5 we prove \sref{Theorem}{1a} and explain how it implies the impossibility to interpret $\PrA$ in its finite subtheories. In Section 6 we discuss the approach for the multi-dimensional case. 


\section{Presburger Arithmetic and Definable Sets}

In the section we give some results about Presburger Arithmetic and definable sets in $(\mathbb{N},+)$ from the literature that will be relevant for our paper.

\begin{definition}
{\em Presburger Arithmetic} $(\PrA)$ is the elementary theory of the model $(\NN,+)$ of natural numbers with addition.
\end{definition}

It is easy to see that every number $n\in \mathbb{N}$, the relations $<$ and $\le,$ modulo comparison relations $\equiv_n$, for natural $n\ge 1$, and the functions $x\longmapsto nx$ of multiplication by a natural number $n$ are definable in the model $(\mathbb{N},+)$. We fix some definitions for these constants, relations, and functions. This gives us a translation from the first-order language $\Lc$ of the signature $\langle =,\{n\mid n\in\mathbb{N}\},+,<~,\{\equiv_n\mid n\ge 1\}, \{x\longmapsto n x\mid n\in \mathbb{N}\}\rangle$ to the first-order language $\Lc^{-}$ of the signature $\langle =,+\rangle$. Since $\PrA$ is the elementary theory of $(\mathbb{N},+)$, regardless of the choice of the definitions, the translation is uniquely determined up to $\PrA$-provable equivalence. Thus we could freely switch between $\Lc$-formulas and equivalent $\Lc^{-}$-formulas. Note that $\PrA$ admits the quantifier elimination in the extended language $\mathcal{L}$ \cite{presburger}.






%

The well-known fact about order types of nonstandard models of $\mathrm{PA}$ also holds for models of Presburger arithmetic:

\begin{theorem}\label{models-class}
Any nonstandard model $\mathfrak{A} \models\PrA$ has the order type $\NN+\ZZ\cdot A$, where $\langle A,<_A\rangle$ is some dense linear order without endpoints. Thus, in particular, any countable model of $\PrA$ either has the order type $\NN$ or $\NN+\ZZ\cdot\QQ.$
\end{theorem}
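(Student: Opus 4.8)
The plan is to reproduce, for models of $\PrA$, the classical analysis of the order types of nonstandard models of $\PA$, exploiting that in the extended language $\Lc$ every model $\mathfrak{A}\models\PrA$ comes equipped with the linear order $<$, the congruences $\equiv_n$, the multiplications $x\longmapsto nx$, and hence also a definable Euclidean division by $2$; moreover, since $\PrA$ is the full theory of $(\NN,+)$, all the needed discreteness and divisibility facts transfer to $\mathfrak{A}$. First I would identify the \emph{standard part} of $\mathfrak{A}$ as the initial segment consisting of the denotations of the numerals; since $\PrA$ proves that $<$ is a discrete linear order with least element $0$ in which every element has an immediate successor and every nonzero element an immediate predecessor, this initial segment has order type $\NN$.

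Next, on the nonstandard elements I would introduce the convex equivalence relation $a\sim b\ardef$ the difference between $a$ and $b$ is below some numeral. Using the successor and predecessor functions, each $\sim$-class of nonstandard elements is order-isomorphic to $\ZZ$, and the set $A$ of such classes, ordered by the induced ordering, is a linear order. Hence $\mathfrak{A}$ splits as $\NN$ followed by a $\ZZ$-indexed sum over $A$, i.e. has order type $\NN+\ZZ\cdot A$, so it remains to check that $\langle A,<_A\rangle$ is a dense linear order without endpoints. This is the only step requiring genuine computation. For ``no greatest class'': if $a$ is nonstandard, then $2a-a=a$ is nonstandard, so $[a]<_A[2a]$. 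For ``no least class'': writing $a=2b+r$ with $r\in\{0,1\}$ (definable via $\equiv_2$ and $x\longmapsto 2x$), the element $b$ is again nonstandard and $a-b\geq b$ is nonstandard, so $[b]<_A[a]$. For density: if $a,b$ are nonstandard with $[a]<_A[b]$, so $b-a$ is nonstandard, put $c=a+\lfloor (b-a)/2\rfloor$; then both $c-a$ and $b-c$ are at least $\lfloor(b-a)/2\rfloor$, which is nonstandard, whence $[a]<_A[c]<_A[b]$.

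Finally, for the countable case: a countable $\mathfrak{A}\models\PrA$ is either standard, with order type $\NN$, or nonstandard, in which case $A$ is a countable dense linear order without endpoints, hence isomorphic to $\QQ$ by Cantor's back-and-forth theorem, giving order type $\NN+\ZZ\cdot\QQ$. The main obstacle is not conceptual but organizational: one has to verify carefully that each nonstandard $\sim$-class really is a full copy of $\ZZ$ (closure under successor and predecessor within the class, and no accumulation at either end) and that the halving arguments above are correctly phrased in terms of the $\Lc$-definable operations; once this is done, the structure of the order type is entirely forced.
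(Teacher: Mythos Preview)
Your argument is correct and is exactly the standard proof of this folklore result. Note, however, that the paper does not actually prove this theorem: it is stated in Section~2 as a ``well-known fact about order types of nonstandard models of $\mathrm{PA}$'' that ``also holds for models of Presburger arithmetic,'' with no proof supplied. So there is nothing to compare against; your write-up simply fills in the omitted classical argument, and does so accurately. The only minor polish I would suggest is in the ``no least class'' step: you write $a-b\ge b$, but what you use is that $a-b=b+r$ is nonstandard (since $b$ is), which is what gives $[b]<_A[a]$; the inequality $a-b\ge b$ is true but not the relevant point.
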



%

For vectors $\overline{c},\overline{p_1},\ldots,\overline{p_n}\in\ZZ^m$ we call the set $\{\overline{c}+\sum k_i \overline{p_i}\mid k_i\in\NN\}$ a {\em lattice} with the {\em generating} vectors $\overline{p_1},\ldots,\overline{p_n}$ and the \emph{initial} vector $\overline{c}$. If $\overline{p_1},\ldots,\overline{p_n}$ are linearly independent ($n\le m$) we call the set an {\em $n$-dimensional fundamental lattice}.

R.~Ito \cite{ir} have proved that any  union of finitely many (possibly, intersecting) lattices in $\NN^m$ is a disjoint union of finitely many fundamental lattices. S.~Ginsburg and E.~Spanier \cite[Theorem 1.3]{ginsburg} have shown that the subsets of $\NN^k$ definable in $(\mathbb{N},+)$ are exactly the subsets of $\NN^k$ that are unions of finitely many (possibly, intersecting) lattices; note that the sets from the latter class are known as \emph{semilinear} sets. Combining these two results we obtain

\begin{theorem} \label{fund}
All subsets of $\NN^k$ definable in $(\mathbb{N},+)$ are exactly the subsets of $\NN^k$ that are disjoint unions of finitely many fundamental lattices.
\end{theorem}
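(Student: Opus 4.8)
The plan is to derive the statement directly from the two results recalled just above it: Ito's theorem that every finite union of lattices in $\NN^k$ is a disjoint union of finitely many fundamental lattices, and the Ginsburg--Spanier theorem that the subsets of $\NN^k$ definable in $(\NN,+)$ are exactly the semilinear sets, i.e. the finite unions of lattices. The theorem is then nothing more than the conjunction of two inclusions.

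For ``definable $\Rightarrow$ disjoint union of fundamental lattices'': if $S\subseteq\NN^k$ is definable in $(\NN,+)$, then by the Ginsburg--Spanier theorem $S$ is a union of finitely many lattices, and by Ito's theorem this very set is a disjoint union of finitely many fundamental lattices. For the converse inclusion, every fundamental lattice is in particular a lattice, hence (trivially) a finite union of lattices, hence semilinear, hence definable in $(\NN,+)$ by the other direction of the Ginsburg--Spanier theorem; and the definable subsets of $\NN^k$ are closed under finite unions, so every disjoint union of finitely many fundamental lattices is definable. One can also write the defining formula by hand: $\overline{x}=(x_1\sco x_k)$ belongs to the fundamental lattice with initial vector $\overline{c}$ and generators $\overline{p_1}\sco\overline{p_n}$ iff $\exists k_1\sco\exists k_n\,\bigwedge_{j=1}^{k}\bigl(x_j=c_j+\sum_{i=1}^{n}k_i p_{i,j}\bigr)$, and after transposing the terms with negative coefficients to the opposite side of each coordinate equation this becomes an $\Lc$-formula built from $+$, the constants, and the multiplication-by-$n$ functions, hence translates to an $\Lc^{-}$-formula.

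Combining the two inclusions gives the claimed equality of classes. There is no genuine obstacle here: the mathematical content lies entirely in the two cited theorems, and the only thing that needs to be checked is the routine matching of conventions --- that the notion of lattice fixed in this paper coincides with the one used by Ito and by Ginsburg and Spanier, and that the coordinate equations cutting out a fundamental lattice are indeed expressible after the translation from $\Lc$ to $\Lc^{-}$.
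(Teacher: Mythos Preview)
Your proposal is correct and matches the paper's own argument exactly: the paper simply states that the theorem is obtained by combining Ito's result with the Ginsburg--Spanier characterization, which is precisely what you do. Your additional remarks on the converse direction and the explicit $\Lc$-formula are fine but go slightly beyond what the paper records.
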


Let us now consider the extension of the first-order predicate language with an additional quantifier $\exists^{=y}x,$ called a {\em counting quantifier} (notion introduced in \cite{barrington}), used as follows: if $f(\overline{x},z)$ is an $\Lc$-formula with the free variables $\overline{x},z,$ then $F=\exists^{=y}z\:G(\overline{x},z)$ is also a formula with the free variables $\overline{x},y.$

We extend the standard assignment of truth values to first-order formulas in the model $(\mathbb{N},+)$ to formulas with counting quantifiers. For a formula $F(\overline{x},y)$ of the form $\exists^{=y}z\:G(\overline{x},z)$, a vector of natural numbers $\overline{a}$, and a natural number $n$ we say that $F(\overline{a},n)$ is true iff there are exactly $n$ distinct natural numbers $b$ such that $G(\overline{a},b)$ is true. H.~Apelt \cite{apelt} and N.~Schweikardt \cite{schweikardt} have discovered that such an extension does not extend the expressive power of $\PrA:$

\begin{theorem}(\cite[Corollary 5.10]{schweikardt})\label{unti}
Every $\Lc$-formula $F(\overline{x})$ that uses counting quantifiers is equivalent in $(\NN,+)$ to a quantifier-free $\Lc$-formula.
\end{theorem}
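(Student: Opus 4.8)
The plan is to prove the result by induction on the structure of $\Lc$-formulas with counting quantifiers, where the only genuinely new case is the counting quantifier itself. So fix a formula $F(\overline{x},y)$ of the form $\exists^{=y}z\,G(\overline{x},z)$ and assume inductively that $G$ is already quantifier-free in the language $\Lc$; we must show that $F$ is equivalent in $(\NN,+)$ to a quantifier-free $\Lc$-formula. By Theorem \ref{fund}, the set $S_{\overline{a}}=\{b\in\NN\mid (\NN,+)\models G(\overline{a},b)\}$, viewed uniformly in the parameter $\overline{a}$, is a disjoint union of finitely many fundamental lattices. Since $S_{\overline{a}}$ is a subset of $\NN$ (the section variable $z$ is a single variable), each such fundamental lattice is either a singleton $\{c\}$ or a one-dimensional arithmetic progression $\{c+kd\mid k\in\NN\}$ with $d\ge 1$. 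The key point is that the relevant data — how many singletons there are, and the parameters $c,d$ of each progression — can be read off in a $\PrA$-definable way from $\overline{a}$, because the decomposition in Theorem \ref{fund} is effective and the bound on the number of components can be taken uniform in $\overline{a}$ (the formula $G$ is fixed, so quantifier elimination in $\Lc$ gives a bounded description whose coefficients depend on $\overline{a}$ only through the finitely many linear forms appearing in $G$).

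Next I would compute $|S_{\overline{a}}|$. If any of the progressions in the decomposition is infinite (i.e. has $d\ge 1$ and $k$ ranging over all of $\NN$), then $S_{\overline{a}}$ is infinite, so $F(\overline{a},n)$ is false for every $n$; the condition ``$S_{\overline{a}}$ is infinite'' is itself expressible by a quantifier-free $\Lc$-formula in $\overline{a}$ (it is a Boolean combination of the conditions defining which case of the decomposition we are in). In the remaining case $S_{\overline{a}}$ is a finite union of singletons, hence finite, and $|S_{\overline{a}}|$ is obtained by an inclusion–exclusion count over the finitely many singleton-components — but these components themselves depend on $\overline{a}$, and components that are equal for some values of $\overline{a}$ may be distinct for others. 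The cleanest way around this is to split $\NN^{|\overline{x}|}$ into finitely many $\Lc$-definable regions on each of which the ``combinatorial type'' of the decomposition (which components coincide, which progressions are infinite, the residues involved, etc.) is constant; on each such region $|S_{\overline{a}}|$ is given by a fixed $\Lc$-term $t(\overline{a})$ — indeed by a single linear term with integer coefficients — so that $F(\overline{a},n)$ holds iff $n=t(\overline{a})$, which is a quantifier-free $\Lc$-condition. Taking the disjunction over the finitely many regions yields the desired quantifier-free formula.

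The main obstacle is the uniformity: turning the statement ``for each $\overline{a}$, $S_{\overline{a}}$ decomposes into fundamental lattices'' (Theorem \ref{fund}, which is about a single definable set) into ``there is a single $\Lc$-definable case split, with boundedly many cases, on each of which the decomposition and hence the count is given by a fixed term''. Making this precise requires revisiting the Ginsburg–Spanier / Ito arguments with the parameter $\overline{a}$ present, or equivalently invoking the effective form of quantifier elimination for $\PrA$ in the language $\Lc$ to get an explicit bounded normal form for $G(\overline{a},z)$ and then analysing the solution set in $z$ by hand. Once that bounded-normal-form description is in place, the counting step is elementary: every section is a finite disjoint union of a bounded number of singletons and arithmetic progressions, and one counts by inclusion–exclusion over the progressions restricted to the interval $[0,N)$ for the relevant period $N$, which is a linear function of the parameters. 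The base cases and the propositional connectives of the induction are immediate since $\Lc$-quantifier-free formulas are closed under Boolean operations, and ordinary quantifiers $\exists z$ are absorbed by noting $\exists z\,G \equiv \neg(\exists^{=y}z\,G \wedge y=0)$ — or, more simply, by appealing directly to quantifier elimination for $\PrA$ in $\Lc$.
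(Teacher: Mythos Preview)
The paper does not prove this theorem; it is quoted from Schweikardt (and Apelt) as a known result and used as a black box, so there is no paper proof to compare against. The question is therefore whether your sketch stands on its own.

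There is a genuine gap in the central step. You apply Theorem~\ref{fund} to the one-dimensional section $S_{\overline{a}}\subseteq\NN$ and then claim that ``the bound on the number of components can be taken uniform in $\overline{a}$''. That is false for the decomposition you describe: a fundamental-lattice decomposition of a subset of $\NN$ consists only of singletons and \emph{infinite} arithmetic progressions $\{c+kd\mid k\in\NN\}$, so when $S_{\overline{a}}$ is finite its Ito decomposition has exactly $|S_{\overline{a}}|$ singletons. For example, if $G(a,z)$ is $z\le a$ then $S_a=\{0,\ldots,a\}$ and the decomposition contains $a+1$ pieces. Hence ``how many singletons there are'' is precisely the quantity you are trying to define, and the argument is circular at that point.

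The remedy you gesture at in the final paragraph is the right one, but it is a different argument from the first two paragraphs. Either apply Theorem~\ref{fund} to the full graph $\{(\overline{a},b)\mid G(\overline{a},b)\}\subseteq\NN^{n+1}$, so that there are boundedly many lattices independent of $\overline{a}$ and their sections at $\overline{a}$ are (possibly finite) arithmetic progressions whose lengths one can count --- this is exactly the mechanism of Lemma~\ref{newlem} with $n=1$, giving a piecewise linear and hence, by Theorem~\ref{jetz}, Presburger-definable cardinality function --- or bypass the lattice decomposition entirely and work directly from a quantifier-free $\Lc$-form of $G$: the solution set in $z$ is then a Boolean combination of intervals with endpoints linear in $\overline{a}$ and residue classes modulo bounded moduli, and the count is obtained by an explicit finite case split. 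Either route can be completed, but your write-up conflates the section-level and the graph-level decompositions, and the first two paragraphs do not survive the example above.
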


\section{Interpretations}



\begin{definition} \label{interpretation} Suppose we have two first-order signatures $\Omega_1$ and $\Omega_2$. An $m$-dimensional \emph{translation} $\iota$ of a first order language of the signature $\Omega_1$ to the first-order language of the signature $\Omega_2$ consists of
  \begin{enumerate}
  \item a first-order formula $\mathit{Dom}_{\iota}(\overline{y})$ of the signature $\Omega_2$, where $\overline{x}$ is a vector of variables of the length $m$, with the intended meaning of the definition of the domain of translation;
  \item  first-order formulas $\mathit{Pred}_{\iota,P}(\overline{y}_1,\ldots,\overline{y}_n)$ of the signature $\Omega_2$, where each $\overline{y}_i$ is a vector of variables of the length $m$, for each predicate $P(x_1,\ldots,x_n)$ from $\Omega_1$ (including $x_1=x_2$);
  \item first-order formulas $\mathit{Fun}_{\iota,f}(\overline{y}_0,\overline{y}_1,\ldots,\overline{y}_n,)$ of the signature $\Omega_2$, where each $\overline{y}_i$ is a vector of variables of the length $m$, for each function $f(x_1,\ldots,x_n)$ from $\Omega_1$.
  \end{enumerate}
  Translation $\iota$ is an {\em interpretation} of a model $\mathfrak{A}$ of the signature $\Omega_1$ with the domain $A$  in  a model $\mathbf{B}$ of the signature $\Omega_2$ with the domain $B$ if
  \begin{enumerate}
  \item $\mathit{Dom}_{\iota}(\overline{y})$ defines a non-empty subset $D\subseteq B^m$;
  \item $\mathit{Pred}_{\iota,=}(\overline{y}_1,\overline{y}_2)$ defines an equivalence relation $\sim$ on the set $D$;
  \item there is a bijection $h\colon  D/{\sim}\to A$ such that for each predicate $P(x_1,\ldots,x_n)$ from $\Omega_1$ and $\overline{b}_1,\ldots,\overline{b}_n\in D$ we have
    $$\mathfrak{A}\models P(h([\overline{b}_1]_{\sim}),\ldots,h([\overline{b}_n]_{\sim}))\iff \mathfrak{B}\models \mathit{Pred}_{\iota,P}(\overline{b}_1,\ldots,\overline{b}_n)$$
    and for each function $f(x_1,\ldots,x_n)$ from $\Omega_1$ and $\overline{b}_0,\overline{b}_1,\ldots,\overline{b}_n\in D$ we have
    $$\mathfrak{A}\models h([\overline{b}_0]_{\sim})=f(h([\overline{b}_1]_{\sim}),\ldots,h([\overline{b}_n]_{\sim}))\iff \mathfrak{B}\models \mathit{Fun}_{\iota,f}(\overline{b}_0,\overline{b}_1,\ldots,\overline{b}_n).$$
  \end{enumerate}
  Translation $\iota$ is an interpretation of a theory $\mathbf{T}$ of the signature $\Omega_1$ in a model $\mathfrak{B}$ of the signature $\Omega_2$ if it is an interpretation of some model of $\mathbf{T}$ in $\mathfrak{B}$.  $\iota$ is an interpretation of a theory $\mathbf{T}$ of the signature $\Omega_1$ in a theory $\mathbf{U}$ of the signature $\Omega_2$ if it is an interpretation of $\mathbf{T}$ in every model $\mathfrak{B}$ of $\mathbf{U}$.

  Translation $\iota$ is called \emph{non-relative} if the formula $\mathit{Dom}_{\iota}(\overline{y})\equiv\top$, where $\overline{y}$ is $(y_1,\ldots,y_m)$.  We say that translation $\iota$ has \emph{absolute equality} if the formula $\mathit{Pred}_{\iota,=}(\overline{y},\overline{z})$ is $y_1=z_1\land\ldots\land y_m=z_m$, where $\overline{y}$ is $(y_1,\ldots,y_m)$ and $\overline{z}$ is $(z_1,\ldots,z_m)$.
\end{definition}

Note that naturally for each translation $\iota$ of a signature $\Omega_1$ to a signature $\Omega_2$, we could define a map $F(x_1,\ldots,x_n)\longmapsto F^{\iota}(\overline{y}_1,\ldots,\overline{y}_m)$ from formulas of the signature $\Omega_1$ to formulas of the signature $\Omega_2$ such that if $\iota$ is an interpretation of a model $\mathfrak{A}$ in a model $\mathfrak{B}$ then for each  $\overline{b}_1,\ldots,\overline{b}_n\in D$ we have
$$\mathfrak{A}\models F(h([\overline{b}_1]_{\sim}),\ldots,h([\overline{b}_n]_{\sim}))\iff \mathfrak{B}\models \mathit{F}^{\iota}(\overline{b}_1,\ldots,\overline{b}_n),$$
where $m$, $D$, and $h$ are as in the definition above.

Also we note that if $\iota$ is an interpretation of a theory $\mathbf{T}$ in a model $\mathfrak{B}$ then there is a unique up to isomorphism model $\mathfrak{A}$ of $\mathbf{T}$ such that $\iota$ is an interpretation of $\mathfrak{B}$ in $\mathfrak{A}$.

\begin{definition}
  Suppose $\iota_1$ and $\iota_2$ are respectively an $m_1$-dimensional and $m_2$-dimensional translations from a signature $\Omega_1$ to a signature $\Omega_2$.  And suppose that $I(\overline{y},\overline{z})$ is a first-order formula of the signature $\Omega_2$, where $\overline{y}$ consists of $m_1$ variables and $\overline{z}$ consists of $m_2$ variables.

  Now assume $\iota_1$ and $\iota_2$ are interpretations of the same model $\mathfrak{A}$ of the signature $\Omega_1$ with the domain $A$ in a model $\mathfrak{B}$ of the signature $\Omega_2$ with the domain $B$. As in Definition~\ref{interpretation} translations $\iota_1$ and $\iota_2$ give us respectively sets $D_1\subseteq B^{m_1}$, $D_2\subseteq B^{m_2}$ and equivalence relations $\sim_1$ on $D_1$ and $\sim_2$ on $D_2$. Under this assumption we say that $I(\overline{y},\overline{z})$ is a \emph{definition of an isomorphism} of $\iota_1$ and $\iota_2$ if we could choose bijections $h_1\colon D_1\to A$ and $h_2\colon D_2\to A$ (satisfying properties of $h$ from Definition~\ref{interpretation}, for respective $\iota_i$) such that for each $\overline{b}\in D_1$ and $\overline{c}\in D_2$ we have
  $$h_1([\overline{b}]_{\sim_1})=h_2([\overline{c}]_{\sim_1})\iff \mathfrak{B}\models I(\overline{b},\overline{c}).$$

  If $\iota_1$ and $\iota_2$ are interpretations of the theory $\mathbf{T}$ in a theory $\mathbf{U}$ and  for each model $\mathfrak{B}$ of $\mathbf{U}$ the formula $I(\overline{y},\overline{z})$ is a definition of an isomorphism between $\iota_1$ and $\iota_2$ as interpretations in $\mathfrak{B}$m then we say that $I(\overline{y},\overline{z})$ is a \emph{definition of an isomorphism} between $\iota_1$ and $\iota_2$ as interpretations of  $\mathbf{T}$ in $\mathbf{U}$.

  If $\iota_1$ and $\iota_2$ are interpretations of a theory $\mathbf{T}$ in a theory $\mathbf{U}$ (a model $\mathfrak{A}$) and there is a definition of an isomorphism then we say that $\iota_1$ and $\iota_2$ as interpretations of a theory $\mathbf{T}$ in a theory $\mathbf{U}$ (a model $\mathfrak{A}$) are \emph{definably isomorphic}.
\end{definition}





Since the theory $\PrA$ that we study is an elementary theory of some model ($\PrA=\Th(\mathbb{N},+)$), actually there is not much difference between interpretations in the standard model and in the theory. A translation $\iota$ is an interpretation of some theory $\mathbf{T}$ in $\PrA$ iff $\iota$ is an interpretation of $\mathbf{T}$ in $(\NN,+)$. A formula $I$ is a definition of an isomorphism between interpretations $\iota_1$ and $\iota_2$ of some theory $\mathbf{T}$ in $\PrA$ iff $I$ is a definition of an isomorphism between $\iota_1$ and $\iota_2$ as interpretations of $\mathbf{T}$ in $(\NN,+)$.


\section{Linear Orders Interpretable in $(\NN,+)$}

\subsection{Functions Definable in Presburger Arithmetic}

%

\begin{definition}
Suppose $A\subseteq \NN^n$ is a definable set. We call a function $f\colon A\ra\NN$ \emph{piecewise polynomial} of a degree $\le m$ if there is a decomposition of $A$ into finitely many fundamental lattices $C_1,\ldots,C_k$ such that the restriction of $f$ on each $C_i$ is a polynomial with rational coefficients of a degree $\le m$ \footnote{In our work, we use the word `piecewise' only in the sense defined here.}.
\end{definition}

In particular, a {\em piecewise linear} function is a piecewise polynomial function of a degree $\le 1$.

%

\begin{theorem}\label{jetz}
All definable in $(\NN,+)$ functions $f\colon\NN^n\ra\NN$ are exactly piecewise linear.
\end{theorem}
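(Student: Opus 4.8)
The plan is to prove both inclusions, the nontrivial one being that every definable $f\colon\NN^n\to\NN$ is piecewise linear.

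For the easy direction, suppose $f$ is piecewise linear: fix a decomposition of $\NN^n$ into fundamental lattices $C_1\sco C_k$ on each of which $f$ agrees with an affine map $g_i(\overline x)=a_{i,0}+\sum_{l=1}^{n}a_{i,l}x_l$ with $a_{i,l}\in\QQ$. Each $C_i$ is definable by \sref{Theorem}{fund}, and clearing denominators turns the condition $y=g_i(\overline x)$ into a linear equation over $\ZZ$, hence into an $\Lc$-formula. So the graph of $f$ is the finite union over $i$ of the definable sets $\{(\overline x,y):\overline x\in C_i\wedge y=g_i(\overline x)\}$, hence definable, and therefore $f$ is definable.

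For the main direction, assume $f$ is definable, so its graph $G=\{(\overline x,y):y=f(\overline x)\}\subseteq\NN^{n+1}$ is definable; by \sref{Theorem}{fund}, $G$ is a disjoint union of fundamental lattices $L_1\sco L_k$, say $L_j=\{\overline c_j+\sum_{i=1}^{d_j}t_i\overline p_{j,i}:t_i\in\NN\}$ with $\overline p_{j,1}\sco\overline p_{j,d_j}$ linearly independent. Write each vector in block form separating the last coordinate, $\overline p_{j,i}=(\overline q_{j,i},r_{j,i})$ and $\overline c_j=(\overline c_j',c_j'')$. The key step is the following linear-algebraic observation. Since $G$ is a graph and the $L_j$ are pairwise disjoint, the projection $\pi\colon\NN^{n+1}\to\NN^n$ onto the first $n$ coordinates is injective on each $L_j$: two points of $L_j$ with the same image share the same last coordinate too (both equal $f$ of that image), so they are equal. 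As the parametrization $\overline t\mapsto\overline c_j+\sum t_i\overline p_{j,i}$ is injective, this forces $\overline q_{j,1}\sco\overline q_{j,d_j}$ to be linearly independent over $\QQ$: an integer dependence among them, split into positive and negative parts, would yield two distinct parameter vectors in $\NN^{d_j}$ with the same projection. In particular $d_j\le n$, the set $\pi(L_j)$ is again a (definable) fundamental lattice, and choosing rational functionals $\lambda_{j,1}\sco\lambda_{j,d_j}$ on $\QQ^n$ dual to $\overline q_{j,1}\sco\overline q_{j,d_j}$ we recover $t_i=\lambda_{j,i}(\overline x-\overline c_j')$ from $\overline x=\pi(\overline x,y)$; hence on $\pi(L_j)$ the function $f$ equals the rational affine map $g_j(\overline x)=c_j''+\sum_i r_{j,i}\,\lambda_{j,i}(\overline x-\overline c_j')$.

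It remains to glue the pieces. Since $f$ is total, the sets $\pi(L_1)\sco\pi(L_k)$ cover $\NN^n$; for each nonempty $S\subseteq\{1\sco k\}$ put $B_S=\bigcap_{j\in S}\pi(L_j)\setminus\bigcup_{j\notin S}\pi(L_j)$. The $B_S$ are definable, pairwise disjoint, and cover $\NN^n$, and on each $B_S$ every $g_j$ with $j\in S$ agrees with $f$, hence with one another. Applying \sref{Theorem}{fund} to each nonempty $B_S$ decomposes it into finitely many fundamental lattices, on each of which $f$ is the restriction of some $g_j$, i.e.\ a polynomial with rational coefficients of degree $\le 1$. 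The union of all these decompositions is a decomposition of $\NN^n$ witnessing that $f$ is piecewise linear. The one place that needs care is the linear-algebraic step above — extracting $\QQ$-linear independence of the horizontal parts of the generators from injectivity of $\pi$ on a fundamental lattice, and converting it into an explicit rational affine formula for $y$ in terms of $\overline x$; everything else is bookkeeping with \sref{Theorem}{fund}.
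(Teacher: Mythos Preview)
Your proof is correct and follows the same strategy as the paper: decompose the graph via \sref{Theorem}{fund} into disjoint fundamental lattices, project to $\NN^n$, and read off a rational affine formula on each piece; the paper simply asserts that the projections are fundamental lattices and that $f$ is linear on them, whereas you spell out the linear-algebra behind this. One simplification you missed: because the $L_j$ are pairwise disjoint and $G$ is the graph of a function, the projections $\pi(L_j)$ are already pairwise disjoint (each $\overline x\in\NN^n$ lifts to a unique point of $G$, hence lies in exactly one $\pi(L_j)$), so your gluing step with the sets $B_S$ is unnecessary.
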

\begin{proof}
The definability of all piecewise linear functions in Presburger Arithmetic is obvious. A function $f\colon \NN^n\to \NN$ is definable iff its graph

\begin{center}
$G=\{(f(a_1,\ldots,a_n),a_1,\ldots,a_n)\mid (a_1,\ldots,a_n)\in \NN^n\}$
\end{center}
\noindent
is definable. According to \sref{Theorem}{fund}, $G$ is a finite union of fundamental lattices $J_1\sqcup\ldots \sqcup J_k$. For $1\le i\le k$ we denote by $J_i'$ the projections of $J_i$ along the first coordinate, $J_i'= \{(a_1,\ldots,a_n)\mid\exists a_0 ( (a_0,a_1,\ldots,a_n)\in J_i)\}$. Clearly, all $J_i'$ are fundamental lattices. And the restriction of the function $f$ on each of $J_i'$ is linear.
\end{proof}

\begin{corollary}\label{irration}
All definable in $(\NN,+)$ functions $f\colon \NN\to\NN$ can be bounded from above by a linear function with a rational slope. Conversely, if $h_1(x)<f(x)<h_2(x)$ for all $x,$ where $h_{1}(x)$ and $h_{2}(x)$ are linear functions of the same irrational slope, then $f(x)$ is not definable.
\end{corollary}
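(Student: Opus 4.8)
The plan is to read off both claims from \sref{Theorem}{jetz} together with the structure of one-dimensional fundamental lattices. Fix a definable $f\colon\NN\to\NN$. By \sref{Theorem}{jetz} it is piecewise linear, so there is a decomposition $\NN=C_1\sqcup\dots\sqcup C_k$ into fundamental lattices on each of which $f$ agrees with a linear function $x\mapsto a_ix+b_i$ with $a_i,b_i\in\QQ$. In dimension one a fundamental lattice is either a singleton $\{c_i\}$ (the $0$-dimensional case) or an infinite arithmetic progression $\{c_i+kp_i\mid k\in\NN\}$ with $p_i\ge 1$ (the $1$-dimensional case); since $\NN$ is infinite and there are only finitely many pieces, at least one $C_i$ is an infinite progression, and on every infinite piece $a_i\ge 0$ because $f$ takes values in $\NN$ on an unbounded set.

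For the first assertion, put $a=\max\{a_i\mid C_i\text{ is infinite}\}\in\QQ$ and let $b$ be any rational at least as large as every $b_i$ and every value $f(c_j)$ taken on a singleton piece. Then for $x$ in an infinite piece $C_i$ we have $f(x)=a_ix+b_i\le ax+b$, and for $x$ in a singleton piece $f(x)\le b\le ax+b$; hence $f(x)\le ax+b$ for all $x$, with rational slope $a$. This is a routine estimate once the decomposition is in hand.

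For the converse, suppose for contradiction that $f$ is definable and $h_1(x)<f(x)<h_2(x)$ for all $x$, where $h_1(x)=\alpha x+\beta_1$ and $h_2(x)=\alpha x+\beta_2$ with $\alpha$ irrational. Using the decomposition above, choose an infinite piece $C_i=\{c_i+kp_i\mid k\in\NN\}$, on which $f(x)=a_ix+b_i$ with $a_i\in\QQ$. Substituting points of the unbounded set $C_i$ into the two inequalities yields $(\alpha-a_i)x<b_i-\beta_1$ and $(a_i-\alpha)x<\beta_2-b_i$ for all $x\in C_i$; since $C_i$ is unbounded, the first inequality forces $\alpha-a_i\le 0$ and the second forces $a_i-\alpha\le 0$, so $\alpha=a_i\in\QQ$, contradicting the irrationality of $\alpha$.

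The argument involves no real obstacle beyond bookkeeping: the one point that deserves explicit mention is the classification of one-dimensional fundamental lattices (point versus infinite progression) and the pigeonhole remark that a finite decomposition of $\NN$ must contain an infinite progression; after that, everything reduces to comparing linear functions on an unbounded domain.
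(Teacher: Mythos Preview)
Your argument is correct and is exactly the intended one: the paper states the corollary without proof, treating both claims as immediate from \sref{Theorem}{jetz}, and you have spelled out precisely the bookkeeping (one-dimensional fundamental lattices are points or arithmetic progressions, compare slopes on an unbounded piece) that the paper leaves implicit.
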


\subsection{Dimension}

Here we give the definition for the notion of dimension of Presburger-definable sets.

\begin{definition}
The {\em dimension} $\dim(A)$ of a Presburger-definable set $A\subseteq\NN^m$ is defined as follows.

\begin{itemize}
\item $\dim(A)=0$ iff $A$ is empty or finite;

\item $\dim(A)=k\ge 1$ iff there is a definable bijection between $A$ and $\NN^k.$
\end{itemize}
\end{definition}

The following theorem shows that the definition indeed gives the unique dimension for each $\PrA$-definable set.

\begin{theorem}\label{Existe}
Suppose $M$ is an infinite Presburger definable subset of $\NN^k,\:k\ge 1$. Then there is a unique natural number $l\in\NN$ such that there is a Presburger definable bijection between $M$ and $\NN^l,$ $1\le l\le k.$
\end{theorem}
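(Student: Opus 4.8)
The plan is to prove Theorem~\ref{Existe} in two parts: existence of a definable bijection between $M$ and some $\NN^l$ with $1 \le l \le k$, and uniqueness of $l$.

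For existence, I would use \sref{Theorem}{fund} to write $M$ as a disjoint union of finitely many fundamental lattices $M = L_1 \sqcup \dots \sqcup L_r$, where $L_i$ is $d_i$-dimensional (as a fundamental lattice, i.e., generated by $d_i$ linearly independent vectors). A $d$-dimensional fundamental lattice is by its very definition in definable bijection with $\NN^d$ (the map $\overline{k} \mapsto \overline{c} + \sum k_j \overline{p_j}$), and this map is definable in $(\NN,+)$ since it is affine with integer coefficients and its graph is semilinear. Let $l = \max_i d_i$. The remaining task is to show $\NN^{d_1} \sqcup \dots \sqcup \NN^{d_r}$ is in definable bijection with $\NN^l$. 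This reduces, by induction on $r$, to two facts: (i) $\NN^a \sqcup \NN^b$ is in definable bijection with $\NN^{\max(a,b)}$ for $a \le b$, and (ii) finite modifications don't matter. For (i) when $a = b$ one splits $\NN^b$ along a coordinate into two copies of $\NN^b$ by the parity of, say, the last coordinate; iterating, a single $\NN^b$ absorbs any finite disjoint union of copies of $\NN^{b}$, and a copy of $\NN^a$ with $a < b$ embeds definably into (and its complement-filling is handled inside) $\NN^b$ via padding with zeros combined with a coordinate split. Since $M$ is infinite, $l \ge 1$, and since $M \subseteq \NN^k$ each $d_i \le k$, so $l \le k$; all the bijections used are piecewise-linear and hence definable by \sref{Theorem}{jetz}.

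For uniqueness, suppose there were definable bijections $M \leftrightarrow \NN^l$ and $M \leftrightarrow \NN^{l'}$ with $l < l'$; composing gives a definable bijection $g\colon \NN^{l'} \to \NN^l$. I would derive a contradiction from a growth/counting argument: the number of points of $\NN^{l'}$ inside a box $[0,N]^{l'}$ grows like $N^{l'}$, while their images under $g$, being a definable (hence piecewise-linear by \sref{Theorem}{jetz}) function, all lie inside $[0,\, cN]^l$ for some constant $c$ (the slopes being bounded, by \sref{Corollary}{irration} applied coordinatewise), a set containing only $O(N^{l})$ points — impossible for $N$ large since $g$ is injective. Making the box bound precise uses that each coordinate of $g$ is piecewise-linear, hence bounded by $cN$ on $[0,N]^{l'}$.

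\textbf{The main obstacle} I anticipate is the existence half — specifically, assembling the explicit definable bijection $\NN^{d_1} \sqcup \dots \sqcup \NN^{d_r} \cong \NN^l$ cleanly, since one must keep track of how lower-dimensional pieces and finite discrepancies are swallowed into $\NN^l$ without clashing. The uniqueness half is conceptually routine once the piecewise-linearity from \sref{Theorem}{jetz} and the slope bound from \sref{Corollary}{irration} are in hand; the only care needed is the counting estimate inside a growing box.
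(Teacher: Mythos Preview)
Your proposal is correct and follows essentially the same route as the paper: existence via the decomposition of \sref{Theorem}{fund} into fundamental lattices (each definably bijective with some $\NN^{d_i}$), setting $l=\max_i d_i$, and then inductively absorbing the pieces into a single $\NN^l$; uniqueness via a box-counting argument exploiting that definable functions are piecewise linear (\sref{Theorem}{jetz}/\sref{Corollary}{irration}). The only cosmetic difference is that for uniqueness the paper packages the growth bound into an auxiliary definable function $g(n)=$ the least $m$ with $f(I_n^{l'})\subseteq I_m^{l}$ and bounds $g$ linearly, whereas you bound the coordinate functions of the bijection directly --- the content is the same.
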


\begin{proof}
First let us show that there is some $l$ with the property. According to \sref{Theorem}{fund}, all definable in $(\NN,+)$ sets are disjoint unions of fundamental lattices $L_1,\ldots,L_n$ of the dimensions $s_1,\ldots,s_n$, respectively. It is easy to see that for each $L_i$ there is a linear bijection with $\NN^{s_i}$, which is obviously definable. Let us put $l$ to be the maximum of $s_i$'s.  Now we just need to notice that for each sequence of natural number $r_1,\ldots,r_m$ and $u=\max(r_1,\ldots,r_m)$ if $u\ge 1$ then we could split a set $\NN^u$ into sets $A_1,\ldots,A_m$ for which we have definable bijections with $\NN^{r_1},\ldots,\NN^{r_m}$, respectively. We prove the latter by induction on $m$.

Now let us show that there is no other $l$ with this property. Assume the contrary. Then clearly, for some $l_1>l_2$  there is a mapping $f\colon\NN^{l_1}\ra\NN^{l_2}$. Let us consider a sequence of expanding cubes, $I_n^{l_1}\eqdef\{(x_1\sco x_k)\mid 0\le x_1\sco x_k\le n\}$. We define function $g\colon \NN\to\NN$ to be the function which maps a natural number $n$ to the least $m$ such that $f(I_n^{l_1})\subseteq I_m^{l_2}$. Clearly, $g$ is a Presburger-definable function. Then there should be some linear function $h\colon \NN\to\NN$ such that $g(n)\le h(n)$, for all $n$. But since for each $n\in\NN$ and $m<n^{l_1/l_2}$ the cube $I_n^{l_1}$ contains more points than the cube  $I_m^{l_2},$ from the definition of $g$ we see that $g(n)\ge n^{l_1/l_2}$. This contradicts the linearity of the function $h$.\qed
\end{proof}

From the proof above we see that the following corollary holds:
\begin{corollary} \label{sublattice}
The dimension of a set $M\subseteq\NN^k$ is equal to the maximal $l$ such that there exists an exactly $l$-dimensional fundamental lattice which is a subset of $M.$
\end{corollary}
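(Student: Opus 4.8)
The plan is to read off both inequalities of the corollary from the proof of \sref{Theorem}{Existe} just given, rather than from its statement; no new tools are needed. Write $l=\dim(M)$. We may assume $M\ne\es$, and the case of finite nonempty $M$ is immediate, since then $l=0$ and every fundamental lattice contained in $M$ is a single point, i.e.\ is $0$-dimensional. So assume $M$ is infinite and fix, via \sref{Theorem}{fund}, a decomposition $M=L_1\sqcup\dots\sqcup L_n$ into fundamental lattices, $L_i$ of dimension $s_i$. The first half of the proof of \sref{Theorem}{Existe} builds a definable bijection between $M$ and $\NN^{\max_i s_i}$, and then uniqueness forces $\max_i s_i=l$. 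One inequality is now immediate: picking $i_0$ with $s_{i_0}=l$, the lattice $L_{i_0}\subseteq M$ is an exactly $l$-dimensional fundamental lattice, so the maximal dimension of a fundamental lattice contained in $M$ is at least $l$.

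For the reverse inequality I would take an arbitrary fundamental lattice $L\subseteq M$, say of dimension $d$, and show $d\le l$. As in the proof of \sref{Theorem}{Existe}, there is a linear (hence Presburger-definable) bijection $\NN^d\to L$; composing it with the inclusion $L\hookrightarrow M$ and with a fixed definable bijection $M\to\NN^l$ gives a Presburger-definable \emph{injection} $j\colon\NN^d\to\NN^l$. Then I would run the cube-counting argument from the uniqueness part of the proof of \sref{Theorem}{Existe} with $j$ in the role of $f$: the function taking $n$ to the least $m$ with $j(I_n^d)\subseteq I_m^l$ is Presburger-definable, hence bounded above by a linear function by \sref{Corollary}{irration}; but injectivity of $j$ means $j(I_n^d)$ has $(n+1)^d$ elements, which cannot fit inside $I_m^l$ unless $(m+1)^l\ge(n+1)^d$, and this forces the function to grow at least like $n^{d/l}$, contradicting linearity when $d>l$. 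Hence $d\le l$, and combining with the previous paragraph proves the corollary.

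The only place where this goes beyond literally quoting \sref{Theorem}{Existe} — and the point I would be careful to spell out — is that the cube-counting estimate uses only that $j$ is injective, not that it is a bijection; inspection of the original argument shows that injectivity is exactly what was used there, so the corollary really is a byproduct of that proof. I do not expect any genuine obstacle: the whole argument is bookkeeping around the two halves of the proof of \sref{Theorem}{Existe}.
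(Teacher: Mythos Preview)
Your proposal is correct and is exactly in the spirit of the paper, which gives no separate proof of \sref{Corollary}{sublattice} but simply writes ``From the proof above we see that the following corollary holds.'' You have unpacked what this means: the decomposition step of \sref{Theorem}{Existe} yields an $l$-dimensional fundamental lattice inside $M$, and the cube-counting step rules out any larger one, with the one honest observation---which you correctly flag---that the counting argument only needs injectivity of the map $\NN^d\to\NN^l$, not bijectivity.
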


\subsection{Presburger-Definable Linear Orders}

\begin{lemma}\label{MS}
Let $\overline{x}=(x_1,\ldots,x_n)$ and $\overline{y}=(y_1,\ldots,y_k)$ be vectors of free variables, where $\overline{y}$ will be treated as a vector of parameters. Let $F(\xv,\yv)$ be an $\Lc^{-}$-formula such that for an infinite set of parameter vectors $B=\{\bv_1,\bv_2,\ldots\}$ the sets defined by $F(\xv,\bv_i)$ are disjoint in $\NN^n.$ Then only a finite number of those definable sets can be exactly $n$-dimensional.
\end{lemma}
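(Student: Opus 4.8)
\noindent\emph{Proof proposal.}
The plan is to reduce the statement to a single Presburger ``cell'' and then run a pigeonhole argument on the residue classes of a fixed sublattice of $\ZZ^n$. First I would normalize $F$: using quantifier elimination for $\PrA$ in the extended language $\Lc$, and then expanding every atom $t\neq s$ as $t<s\vee s<t$ and every atom $t\not\equiv_d s$ as the disjunction of the remaining admissible residues, one writes $F(\xv,\yv)\equiv\bigvee_{j\le N}G_j(\xv,\yv)$ where each $G_j$ is a conjunction of atoms of the forms $t=s$, $t<s$, and $t\equiv_d s$ with $t,s$ affine-linear in $\xv,\yv$ over $\ZZ$. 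Since the dimension of a finite union of definable sets is the maximum of the dimensions of the members (a routine consequence of \sref{Theorem}{fund} and \sref{Corollary}{sublattice}), if infinitely many of the sets defined by the $F(\xv,\bv_i)$ were exactly $n$-dimensional, then by pigeonhole some \emph{fixed} disjunct $G=G_j$ defines an exactly $n$-dimensional set for infinitely many $\bv_i$, and those sets are pairwise disjoint, being subsets of the pairwise disjoint sets defined by $F(\xv,\bv_i)$. So it suffices to bound the number of pairwise disjoint exactly $n$-dimensional sets among those defined by $G(\xv,\bv)$ as $\bv$ varies.

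Next I would separate the parameter-free ``shape'' of such a cell from its parameter-dependent offsets. Any equality atom of $G$ involving $\xv$ nontrivially confines the $\xv$-set to an affine hyperplane, hence to dimension $<n$, so for the relevant $\bv$ the equality atoms impose conditions on $\bv$ alone. The inequality atoms define a polyhedron $P(\bv)=\{x\in\RR^n\mid A_0x\le c(\bv)\}$ whose matrix $A_0$ does not depend on $\bv$; the congruence atoms, when jointly consistent, define a coset $\rho(\bv)+\Lambda_0$ of the fixed finite-index sublattice $\Lambda_0\le\ZZ^n$ cut out by the homogeneous versions of those congruences (the index of $\Lambda_0$ divides the product of the moduli occurring in $G$, and $\Lambda_0$ does not depend on $\bv$). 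Thus the set defined by $G(\xv,\bv)$ equals $\bigl(P(\bv)\cap\NN^n\bigr)\cap\bigl(\rho(\bv)+\Lambda_0\bigr)$. If this is $n$-dimensional it contains, by \sref{Corollary}{sublattice}, an $n$-dimensional fundamental lattice, whose $n$ linearly independent generating vectors are recession directions of $P(\bv)\cap\RR^n_{\ge 0}$, hence lie in the cone $C:=\{x\mid A_0x\le 0\}\cap\RR^n_{\ge 0}$; so $C$ is full-dimensional and, being defined by $A_0$, is the \emph{same} for all $\bv$. Moreover $C$ is the recession cone of $P(\bv)\cap\RR^n_{\ge 0}$, so that polyhedron contains $\overline{w}+C$ for any of its points $\overline{w}=\overline{w}(\bv)$. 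Summing up: whenever $G(\xv,\bv)$ defines an $n$-dimensional set, that set contains $\bigl(\overline{w}(\bv)+C\bigr)\cap\bigl(\rho(\bv)+\Lambda_0\bigr)$ with $C$ and $\Lambda_0$ fixed.

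Finally comes the pigeonhole step. For any vectors $\overline{w},\overline{w}'$ the translates $\overline{w}+C$ and $\overline{w}'+C$ of the full-dimensional cone $C$ have a common part containing $\overline{p}+C$ for a suitable $\overline{p}$ (push $\overline{w}+\overline{w}'$ far into the interior of $C$), and a full-dimensional cone contains arbitrarily large balls, hence meets every coset of the finite-index lattice $\Lambda_0$ in infinitely many points. Therefore, if $G(\xv,\bv)$ and $G(\xv,\bv')$ both define $n$-dimensional sets and $\rho(\bv)\equiv\rho(\bv')\pmod{\Lambda_0}$, then $\bigl(\overline{p}+C\bigr)\cap\bigl(\rho(\bv)+\Lambda_0\bigr)$ is a nonempty subset of both, contradicting disjointness. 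Hence the offsets $\rho(\bv_i)$ attached to distinct $n$-dimensional cells of $G$ lie in distinct cosets of $\Lambda_0$, of which there are only $[\ZZ^n:\Lambda_0]<\infty$; summing this bound over the finitely many disjuncts contradicts the assumption that infinitely many of the $F(\xv,\bv_i)$ define $n$-dimensional sets, which is what we want.

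The step I expect to be the main obstacle is the middle one: cleanly isolating the parameter-free data $A_0$, $C$, $\Lambda_0$ from the parameter-dependent offsets $c(\bv),\overline{w}(\bv),\rho(\bv)$, and verifying that neither passing from the real polyhedron $P(\bv)$ to its lattice points nor intersecting with the congruence coset drops the dimension below $n$. An alternative to the final pigeonhole is a density count: the set $\bigl(\overline{w}+C\bigr)\cap\bigl(\rho+\Lambda_0\bigr)$ has limiting density $\mathrm{vol}(C\cap[0,1]^n)/[\ZZ^n:\Lambda_0]>0$ in the cubes $[0,M]^n$, independently of the offsets, so $t$ pairwise disjoint such sets would force $t\cdot\mathrm{vol}(C\cap[0,1]^n)/[\ZZ^n:\Lambda_0]\le 1$ and again bound $t$.
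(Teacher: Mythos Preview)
Your proof is correct and takes a genuinely different route from the paper's. The paper does not perform quantifier elimination on $F$ at all; instead it passes to the single definable set $A\subseteq\NN^{n+k}$ cut out by $F(\xv,\yv)$, decomposes $A$ as a disjoint union of finitely many fundamental lattices $J_i$ via \sref{Theorem}{fund}, and observes that if some section $J_i\upharpoonright\bv_0$ is $n$-dimensional then the $n$ generating vectors of a full-dimensional sublattice of that section are nonnegative integer combinations of the generators of $J_i$ themselves, hence can be transported to \emph{every} nonempty section $J_i\upharpoonright\bv$. Thus infinitely many of the sets $F(\xv,\bv_i)$ would contain shifts of one and the same $n$-dimensional fundamental lattice, and such shifts cannot be pairwise disjoint (two shifts whose initial vectors agree modulo the $\ZZ$-span of the generators already meet). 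Your argument reaches the same endgame---a fixed full-dimensional cone and a pigeonhole over finitely many cosets of a fixed finite-index sublattice---but by dissecting the syntax of a single quantifier-free cell into its polyhedral part $P(\bv)$ and its congruence part $\rho(\bv)+\Lambda_0$. The paper's route is shorter because the decomposition of \sref{Theorem}{fund} already packages both the polyhedral and the congruence data into the generators of each $J_i$, and working in $\NN^{n+k}$ makes the parameter-independence of those generators automatic; your route, on the other hand, makes the finite bound completely explicit (at most $[\ZZ^n:\Lambda_0]$ per disjunct) and does not need to leave $\NN^n$. The concern you flag about the middle step is not a real obstacle: once the equality atoms involving $\xv$ are excluded, the matrix $A_0$ and the lattice $\Lambda_0$ are read off from the $\xv$-coefficients of the remaining atoms and are manifestly parameter-free, and the dimension is preserved because $(\overline{w}+C)\cap(\rho+\Lambda_0)$ is already an $n$-dimensional fundamental lattice whenever $C$ is full-dimensional.
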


\begin{proof}
Let us consider the set $A\subseteq \NN^{n+k}$ defined by the formula $F(\xv,\yv)$.  For each vector $\bv=(b_1,\ldots,b_k)\in \mathbb{N}^k$ and set $S\subseteq \NN^{n+k}$ we consider section $S\upharpoonright\bv =\{(a_1,\ldots,a_n,b_1,\ldots,b_k) \mid (a_1,\ldots,a_n,b_1,\ldots, b_k)\in S\}$. Clearly in this terms in order to prove the lemma, we need to show that there are only finitely many distinct $\bv\in B$ such that the section $A\upharpoonright \bv$ is an $n$-dimensional set. By \sref{Theorem}{fund}, the set $A$ is a disjoint union of finitely many of fundamental lattices  $J_i\subseteq \NN^{n+k}.$ It is easy to see that if some section $A\upharpoonright\bv$ were an $n$-dimensional set then at least for one $J_i$, the section $J_i\upharpoonright\bv$ were an $n$-dimensional set. Thus it is enough to show that for each $J_i$ there are only finitely many vectors $\bv\in B$ for which the section $J_i\upharpoonright\bv$ is an $n$-dimensional set.

Let us now assume for a contradiction that for some $J_i$ there are infinitely many $J_i\upharpoonright \bv_0$, for $\bv_0\in B$, that are $n$-dimensional sets. Let us consider some parameter vector $\bv\in\NN^k$ such that the section $J\upharpoonright\bv$ is an $n$-dimensional set. Then by \sref{Corollary}{sublattice} there exists an $n$-dimensional fundamental lattice $K\subseteq J_i\upharpoonright\bv_0$. Suppose the generating vectors of $K$ are $\vv_1,\ldots,\vv_n$ and initial vector of $K$ is $\uv$. It is easy to see that each vector $\vv_j$ is a non-negative linear combination of generating vectors of $J$, since otherwise for large enough $h\in \mathbb{N}$ we would have $\cv+h\vv_j\not\in J$. Now notice that for any  $\bv\in B$ and $\av \in J\upharpoonright \bv$ the $n$-dimensional lattice with generating vectors $\vv_1,\ldots,\vv_n$ and initial vector $\av$ is a subset of $\av \in J\upharpoonright \bv$.

Thus infinitely many of the sets defined by $F(\xv,\bv)$, for $\bv\in B$ contain the shifts of the same $n$-dimensional fundamental lattice. It is easy to see that the latter contradicts the assumption that all the sets are disjoint.\qed
\end{proof}

\begin{definition}
We call a linear ordering $(L,<)$ \emph{scattered} 
if it does not have an infinite dense suborder.
\end{definition}

\begin{definition}
  Let $(L,\prec)$ be a linear ordering. We define a family of equivalence relations $\simeq_{\alpha}$, for ordinals $\alpha\in\mathbf{Ord}$ by transfinite recursion:
  \begin{itemize}
  \item $\simeq_0$ is just equality;
  \item $\simeq_{\lambda}=\bigcup\limits_{\beta<\lambda}\simeq_{\alpha}$, for limit ordinals $\lambda$;
  \item $a\simeq_{\alpha+1}b \stackrel{\mbox{\footnotesize $\mathrm{def}$}}{\iff} |\{c\in L\mid (a\prec c\prec b)\mbox{ or }(b\prec c \prec a)\}/{\simeq_{\alpha}}|<\aleph_0$.
  \end{itemize}
  Let us define  $VD_*$-\emph{rank}\footnote{$VD$ stand for {\em very discrete}; see \cite[p.\,84-89]{rosenstein}.} $\mathrm{rk}(L,\prec)\in \mathbf{Ord}\cup \{\infty\}$ of the order $(L,\prec)$. The $VD_*$-rank $\mathrm{rk}(L,\prec)$ is the least $\alpha$ such that $L/{\simeq_{\alpha}}$ is finite. And if for all $\alpha\in \mathbf{Ord}$ the factor-set $L/{\simeq_{\alpha}}$ is infinite  then we put $\mathrm{rk}(L,\prec)=\infty$.

  By definition we put $\alpha<\infty$, for all $\alpha\in \mathbf{Ord}$.
\end{definition}
\begin{remark}
Linear orders $(L,\prec)$ such that $\mathrm{rk}(L,\prec)<\infty$ are exactly the scattered linear orders.
\end{remark}

\begin{example} \label{rank0rank1}
The orders with the $VD_*$-rank equal to $0$ are exactly finite orders, and the orders with $VD_*$-rank $\le 1$ are exactly the order sums of finitely many copies of $\NN$, $-\NN$ and $1$ (one element linear order).
\end{example}
%


%
%


\begin{theorem}[Restatement of \sref{Theorem}{ordering}]\label{rank_from_dimension}
For every natural $m\ge 1$, linear orders which are $m$-dimensionally interpretable in $(\mathbb{N},+)$ have $VD_*$-rank $m$ or below.
\end{theorem}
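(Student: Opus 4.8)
The plan is to prove Theorem~\ref{rank_from_dimension} by induction on the dimension $m$. The base case $m=1$ should follow from the structure theory already in hand: a one-dimensional interpretation produces a linear order whose domain is a Presburger-definable subset $D\subseteq\NN$ (quotiented by a definable equivalence relation, but one can pass to a definable set of representatives), hence by \sref{Theorem}{fund} a finite union of fundamental lattices, i.e. a finite union of arithmetic progressions. Using \sref{Corollary}{irration} the ordering relation $\prec$ restricted to a single progression is definable and monotone-or-antimonotone in a suitably coarse sense, so each progression splits into finitely many blocks each of which is order-isomorphic to $\NN$, $-\NN$, or a point; by \sref{Example}{rank0rank1} this gives $VD_*$-rank $\le 1$.

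For the inductive step, suppose $(L,\prec)$ is $(m+1)$-dimensionally interpretable, with domain a definable $D\subseteq\NN^{m+1}$. The key idea I would pursue is to understand the equivalence relation $\simeq_1$ directly on $D$: two points are $\simeq_1$-equivalent iff only finitely many elements lie strictly between them. I want to show that the quotient $L/{\simeq_1}$ is itself interpretable (in $(\NN,+)$) by an interpretation of dimension $\le m$, after which \sref{Theorem}{models-class} is irrelevant but the induction hypothesis gives $\mathrm{rk}(L/{\simeq_1})\le m$, and since $\mathrm{rk}(L)\le \mathrm{rk}(L/{\simeq_1})+1$ by the definition of the $\simeq_\alpha$ hierarchy, we would get $\mathrm{rk}(L)\le m+1$. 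The crucial reduction is therefore: \emph{each $\simeq_1$-class is a definable set of dimension $\ge 1$ (so that passing to the quotient "costs" a dimension), and $L/{\simeq_1}$ with the induced order is interpretable in dimension $\le m$}. To see that classes have dimension $\ge 1$: if a $\simeq_1$-class $[a]$ were finite (dimension $0$), then $a$ has an immediate successor-block structure, but one shows the set of such "isolated-type" points is small; more precisely, the interpreted $\prec$ is definable from $+$, so the "betweenness counting function" $n\mapsto |\{c : a\prec c\prec b\}|$ is, via \sref{Theorem}{unti} and \sref{Theorem}{jetz}, piecewise linear in the parameters, and a dimension-counting / pigeonhole argument using \sref{Lemma}{MS} forces the $\simeq_1$-classes that are $(m+1)$-dimensionally "thick" — one exploits that $D$ decomposes into finitely many fundamental lattices and analyzes $\prec$ on each pair of generating directions.

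The main obstacle, as I see it, is precisely making rigorous the claim that $L/{\simeq_1}$ is interpretable in dimension $\le m$ rather than merely definable-with-the-same-dimension. The difficulty is that $\simeq_1$ is defined by a \emph{finiteness} condition ("finitely many elements between"), which a priori is not first-order expressible; the escape route is \sref{Theorem}{unti}: the counting quantifier is eliminable, so "the set of $c$ between $\overline{a}$ and $\overline{b}$ is finite" — equivalently "has dimension $0$" — should be expressible, and one needs a uniform bound (coming from the piecewise-linear / fundamental-lattice description) on the number of $\simeq_1$-classes that are full-dimensional, which is where \sref{Lemma}{MS} does the real work: the full-dimensional classes are pairwise disjoint full-dimensional definable sets cut out by a single formula with parameters, hence finitely many. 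Then one "contracts" each such class to a single generating lattice of one lower dimension and checks the induced order on the quotient remains Presburger-definable. I expect the bookkeeping — tracking how the ordering $\prec$ interacts with the lattice decomposition of $D$ and verifying the quotient order's definability — to be the technically heaviest part, while the conceptual skeleton (induction on $m$, one dimension lost per $VD_*$-level, \sref{Lemma}{MS} bounding the number of thick classes) is straightforward.
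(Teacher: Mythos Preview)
Your proposal assembles the right toolbox—induction on $m$, the dimension notion, and Lemma~\ref{MS}—but the inductive step as sketched contains a genuine gap, and the paper's argument sidesteps it by organizing the induction differently.

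The gap is the claim that $L/{\simeq_1}$ is interpretable in dimension $\le m$. Even granting that $\simeq_1$ is definable (it is: ``finitely many between'' is the same as ``bounded between'', which is first-order), what you need is a definable set of representatives for the $\simeq_1$-classes that has dimension $\le m$. Your justification mixes two incompatible claims: first that every $\simeq_1$-class has dimension $\ge 1$, then that Lemma~\ref{MS} bounds the number of \emph{full}-dimensional (i.e.\ $(m{+}1)$-dimensional) classes. The latter is true but leaves possibly infinitely many classes of intermediate dimensions $1,\ldots,m$, and there is no reason any definable transversal to them should have dimension $\le m$. Nor is the first claim argued: a $\simeq_1$-class can be a singleton, and ``one shows the set of such points is small'' is precisely the missing content. (The base case is also over-engineered; invoking \sref{Corollary}{irration} to force monotonicity on arithmetic progressions is not obviously correct and in any event unnecessary.)

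The paper avoids building the quotient entirely. It argues by contradiction: if $\mathrm{rk}(L,\prec)>m$, pick an infinite $\prec$-chain $\overline{a}_0\prec\overline{a}_1\prec\cdots$ with $\overline{a}_i\not\simeq_m\overline{a}_{i+1}$, and consider the open intervals $L_i=(\overline{a}_i,\overline{a}_{i+1})$. Each $L_i$ then has $\mathrm{rk}(L_i,\prec)>m-1$; applying the induction hypothesis in \emph{contrapositive} form, $L_i$ cannot be $(m{-}1)$-dimensionally interpretable, so $\dim(L_i)=m$. But the $L_i$ are pairwise disjoint and are all cut out by the single formula $\overline{y}_1\prec_*\overline{x}\prec_*\overline{y}_2$ with varying parameters, so Lemma~\ref{MS} yields the contradiction. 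The point is that intervals are automatically parametrically definable, and the induction hypothesis converts a rank lower bound directly into a dimension lower bound—no quotient construction, no transversal, no analysis of $\simeq_1$ needed. The base case $m=1$ is absorbed into the same scheme (an infinite interval in $\NN$ is already $1$-dimensional).
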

\begin{proof}


We prove the theorem by induction on $m$.

Suppose we have an $m$-dimensional interpretation of a linear order $(L,\prec)$ in $(\NN,+)$, i.e. there is an $\Lc^{-}$ formula $D(\xv)$ giving the domain of the interpretation and $\Lc^{-}$ formula $\prec_*(\xv,\yv)$ giving interpretation of the order relation, where both $\xv$ and $\yv$ consist of $m$ variables. Without loss of generality we may assume that $L= \{\av \in \mathbb{N}^m \mid (\mathbb{N},+)\models D(\av)\}$ and $\prec$ is defined by the formula $\prec_*$.

Now assume for a contradiction that $\mathrm{rk}(L,\prec)>m$. By the definition of $VD_*$-rank, there are infinitely many distinct $\simeq_m$-equivalence classes in $L$. Hence there is an infinite chain $\av_0\prec \av_1\prec\ldots $ of elements of $L$ such that $\av_i\not\simeq_m \av_{i+1}$, for each $i$. Let us consider intervals $L_i=\{\bv\in L\mid \av_i<\bv<\av_{i+1}\}$. Since $\av_i\not\simeq_m \av_{i+1}$, the set $L_i/{\simeq_{m-1}}$ is infinite and $\mathrm{rk}(L_i,\prec)>m-1$.

Clearly, all $L_i$ are Presburger definable sets. Let us show that $\dim(L_i)\ge m$, for each $i$. If $m=1$ then it follows from the fact that $L_i$ is infinite. If $m>1$ then we assume for a contradiction that $\dim(L_i)<m$. And notice that in this case $(L_i,\prec)$ would be $m-1$-dimensionally interpretable in $(N,+)$ which contradict induction hypothesis and the fact that $\mathrm{rk}(L_i,\prec)>m-1$. Since $L_i\subseteq \NN^m$, we conclude that $\dim(L_i)=m$, for all $i$.

Now consider the parametric family of subsets of $\NN^m$ given by the formula $\yv_1\prec_* \xv\prec_* \yv_2$, where we treat variables $\yv_1$ and $\yv_2$ as parameters. We consider sets given by pairs of parameters $\yv_1=\av_i$ and $\yv_2=\av_{i+1}$, for $i\in\mathbb{N}$. Clearly the sets are exactly $L_i$'s. Thus we have infinitely many disjoint sets of the dimension $m$ in the family and hence we have contradiction with Lemma \ref{MS}.
\end{proof}

\begin{remark} Each scattered linear order of $VD_*$-rank 1 is $1$-dimensionally interpretable in $(\NN,+)$. There are scattered linear orders of $VD_*$-rank 2 that are not interpretable in $(\NN,+)$.
\end{remark}
\begin{proof}
The interpretability of linear orders with rank $0$ and rank $1$ follows from \sref{Example}{rank0rank1}.

Since there are uncountably many non-isomorphic scattered linear orders of $VD_*$-rank 2 and only countably many linear orders interpretable in $(\NN,+)$, there is some scattered linear order of $VD_*$-rank 2 that is not interpretable in $(\NN,+)$.\qed
\end{proof}

\section{One-Dimensional Self-Interpretations and Visser's Conjecture}

The following theorem is a generalization of \cite[pp.\,27-28,\;Lemmas 3.2.2-3.2.3]{jetze}.

\begin{theorem}\label{4r}
  Let $\mathbf{U}$ be a theory and $\iota$ be an $m$-dimensional interpretation of $\mathbf{U}$ in $(\NN,+)$. Then for some $m'\le m$ there is an $m'$-dimensional non-relative interpretation with absolute equality $\kappa$ of $\mathbf{U}$ in $(\NN,+)$ which is definably isomorphic to $\iota$.
\end{theorem}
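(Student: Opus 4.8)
The plan is to obtain $\kappa$ in two stages: first pass from $\iota$ to a definably isomorphic interpretation that is still $m$-dimensional but has absolute equality, and then ``compress'' its domain to a full power $\NN^{m'}$ with $m'\le m$ using \sref{Theorem}{Existe}. Throughout we work with interpretations in $(\NN,+)$, which is harmless since $\PrA=\Th(\NN,+)$. For the first stage, write $D=\{\bv\in\NN^m\mid(\NN,+)\models\mathit{Dom}_\iota(\bv)\}$, let $\sim$ be the equivalence relation on $D$ defined by $\mathit{Pred}_{\iota,=}$, and let $h\colon D/{\sim}\to A$ witness that $\iota$ interprets a model $\mathfrak{A}\models\mathbf{U}$ with domain $A$. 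The lexicographic order $<_{\mathrm{lex}}$ on $\NN^m$ is definable in $(\NN,+)$ and is a well-ordering (of type $\omega^m$), so every $\sim$-class has a $<_{\mathrm{lex}}$-least element. Let $D_0$ be the definable set of those $\bv\in D$ that are $<_{\mathrm{lex}}$-least in their $\sim$-class; then $D_0$ is a transversal for $\sim$, and $\sim$ restricted to $D_0$ is literal equality. Transporting $\iota$ along $\bv\mapsto[\bv]_\sim$ gives an $m$-dimensional interpretation $\iota_1$ of $\mathbf{U}$ with domain $D_0$ and absolute equality: $\mathit{Pred}_{\iota_1,P}$ is $\mathit{Pred}_{\iota,P}$ relativized to $D_0$, and $\mathit{Fun}_{\iota_1,f}(\bv_0,\bv_1,\ldots,\bv_n)$ asserts that $\bv_0\in D_0$ and $\bv_0$ is $\sim$-equivalent to some $\bv_0'$ with $\mathit{Fun}_{\iota,f}(\bv_0',\bv_1,\ldots,\bv_n)$. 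Moreover the formula $I_1(\yv,\bv)\equiv\bigl(\bv\in D_0\wedge\mathit{Pred}_{\iota,=}(\yv,\bv)\bigr)$ is a definition of an isomorphism between $\iota$ and $\iota_1$, witnessed by the bijections $h$ and $\bv\mapsto h([\bv]_\sim)$.

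For the second stage we may assume the interpreted model $\mathfrak{A}$ is infinite: if $\mathbf{U}$ has no infinite model then no non-relative interpretation of $\mathbf{U}$ in $(\NN,+)$ exists, so the statement is to be read under that hypothesis (it is the only case that arises in our applications). Then $D_0$ is an infinite Presburger-definable subset of $\NN^m$, so by \sref{Theorem}{Existe} there are $m'=\dim(D_0)$, with $1\le m'\le m$, and a Presburger-definable bijection $g\colon D_0\to\NN^{m'}$. Transporting $\iota_1$ along $g$ yields an $m'$-dimensional interpretation $\kappa$ of $\mathbf{U}$ in $(\NN,+)$ whose domain formula is $\top$ (so $\kappa$ is non-relative), whose equality formula is componentwise equality of $m'$-tuples (so $\kappa$ has absolute equality, as $g$ is a bijection and $\iota_1$ had absolute equality), and whose $\mathit{Pred}_{\kappa,P}(\overline{z}_1,\ldots,\overline{z}_n)$ and $\mathit{Fun}_{\kappa,f}(\overline{z}_0,\ldots,\overline{z}_n)$ assert that the corresponding $\iota_1$-formula holds of the unique $\bv_i\in D_0$ with $g(\bv_i)=\overline{z}_i$; these are definable by bounded quantification over the definable graph of $g$. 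The graph of $g$, viewed as a formula $I_2(\bv,\overline{z})$, is a definition of an isomorphism between $\iota_1$ and $\kappa$.

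Finally, $I(\yv,\overline{z})\equiv\exists\bv\,\bigl(I_1(\yv,\bv)\wedge I_2(\bv,\overline{z})\bigr)$ is a definition of an isomorphism between $\iota$ and $\kappa$: using the explicit witnessing bijections from the two stages, $h([\yv]_\sim)$ equals the image of $\overline{z}$ under the bijection $\NN^{m'}\to A$ exactly when $(\NN,+)\models I(\yv,\overline{z})$, because $h_1\colon D_0\to A$ is a bijection. This produces the required $\kappa$. The whole argument is an exercise in transporting interpretations along definable bijections; the only point that needs real care is verifying that the clauses of Definition~\ref{interpretation} and the isomorphism conditions survive each transport — and it is cleanest to exhibit $I_1$ and $I_2$ concretely, as above, so that their composition manifestly witnesses $\iota\cong\kappa$, rather than invoking a general composition principle for definable isomorphisms. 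The one external ingredient doing substantive work is \sref{Theorem}{Existe}: it is what converts an arbitrary definable domain $D_0\subseteq\NN^m$ into a full power $\NN^{m'}$ with $m'\le m$, thereby delivering both non-relativity and the bound $m'\le m$.
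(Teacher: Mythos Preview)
Your proof is correct and follows essentially the same two-stage approach as the paper: first pass to absolute equality by selecting the lexicographically least representative from each $\sim$-class, then use \sref{Theorem}{Existe} to compress the resulting definable domain to a full power $\NN^{m'}$. Your write-up is in fact more careful than the paper's sketch---you spell out the transported predicate and function formulas and the explicit definable isomorphisms $I_1,I_2$, and you flag the (harmless for our applications) edge case of finite models, which the paper glosses over.
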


\begin{proof}
  First let us find $\kappa$ with absolute equality. Indeed there is a definable in $(\NN,+)$ well-ordering $\prec$ of $\NN^m$:
  $$(a_0,\ldots,a_{m-1})\prec (b_0,\ldots,b_{m-1})\stackrel{\mbox{\footnotesize \textrm{def}}}{\iff} \exists  i< m (\forall j< i\;(a_j=b_j)\land a_i<b_i).$$
  Now we could define $\kappa$ by taking the definition of $+$ from $\iota$, taking the trivial interpretation of equality, and taking the domain of interpretation to be the part of the domain of $\iota$ that consists of the $\prec$-least elements of equivalence classes with respect to $\iota$-interpretation of equality. It is easy to see that this $\kappa$ is definably isomorphic to $\iota$.

  Now assume that we already have $\iota$ with absolute equality. We find the desired non-relative interpretation $\kappa$ by using \sref{Theorem}{Existe} and bijectively mapping the domain of $\iota$ to $\NN^{m'}$, where $m'$ is the dimension of the domain of the interpretation $\iota$.\qed
\end{proof}

Combining \sref{Theorem}{models-class} and \sref{Theorem}{rank_from_dimension}, we obtain
\begin{theorem}[Restatement of \sref{Theorem}{1a}]\label{order}
For any model $\mathfrak{A}$ of $\PrA$ that is one-dimensionally interpreted in the model $(\NN,+)$, (a) $\mathfrak{A}$ is isomorphic to $(\NN,+)$; (b) the isomorphism is definable in $(\NN,+)$.
\end{theorem}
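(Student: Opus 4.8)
The plan is to combine the structural description of models of $\PrA$ with the bound on $VD_*$-rank in the one-dimensional case, and then promote the abstract isomorphism to a definable one. For part (a): by \sref{Theorem}{4r} we may assume $\iota$ is a non-relative one-dimensional interpretation with absolute equality, so the domain of $\mathfrak{A}$ is all of $\NN$ and $\iota$-equality is genuine equality; the interpreted order $\prec$ on $\NN$ is then a Presburger-definable linear order. By \sref{Theorem}{rank_from_dimension} with $m=1$, $(\NN,\prec)$ has $VD_*$-rank at most $1$, hence by \sref{Example}{rank0rank1} it is a finite sum of copies of $\NN$, $-\NN$ and $1$. On the other hand, $\mathfrak{A}\models\PrA$, so by \sref{Theorem}{models-class} its order type is either $\NN$ or $\NN+\ZZ\cdot\QQ$; the latter has infinite $VD_*$-rank (it contains a dense suborder, so it is not even scattered), so it is incompatible with rank $\le 1$. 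Therefore $\mathfrak{A}$ has order type $\NN$, and being a model of $\PrA$ with that order type it must be the standard model $(\NN,+)$; this gives (a).

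For part (b) I would make the isomorphism explicit. Once we know $(\NN,\prec)$ has order type $\NN$, the map $g\colon\mathfrak{A}\to(\NN,+)$ sending an element to the number of its $\prec$-predecessors is the unique isomorphism, and it is exactly the order-isomorphism between $(\NN,\prec)$ and $(\NN,<)$. So it suffices to show that whenever a Presburger-definable linear order $\prec$ on $\NN$ has order type $\NN$, the counting function $x\mapsto |\{y : y\prec x\}|$ is definable in $(\NN,+)$. This is where the counting-quantifier elimination \sref{Theorem}{unti} does the work: the function $g$ is defined by the counting-quantifier formula $\exists^{=g(x)}y\,(y\prec_* x)$, which \sref{Theorem}{unti} converts into an ordinary $\Lc$-formula, hence into an $\Lc^{-}$-formula. (Equivalently, one notes $g$ is a total function $\NN\to\NN$ whose graph is definable once counting quantifiers are eliminated, so \sref{Theorem}{jetz} applies and $g$ is even piecewise linear.) Transporting $g$ back through the definable-isomorphism machinery of \sref{Theorem}{4r} yields a formula defining the isomorphism between the original interpretation $\iota$ and the identity interpretation, which is (b).

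The only mild subtlety — and the step I would be most careful about — is bookkeeping: \sref{Theorem}{4r} only gives a \emph{definable isomorphism} between $\iota$ and the normalized interpretation $\kappa$, so after producing a definable isomorphism from $\kappa$ to the identity interpretation one must compose the two definitions (and unwind the translation $F\mapsto F^\iota$) to land on a single $\Lc^{-}$-formula $I(x,y)$ witnessing that $\iota$ and the identity self-interpretation are definably isomorphic. None of this is deep, but it is the part where one could accidentally conflate "isomorphic as models" with "definably isomorphic as interpretations." Everything else is a direct appeal to the theorems already proved: \sref{Theorem}{models-class} pins the order type among models of $\PrA$, \sref{Theorem}{rank_from_dimension} rules out everything but order type $\NN$, and \sref{Theorem}{unti} (or \sref{Theorem}{jetz}) makes the counting isomorphism definable.
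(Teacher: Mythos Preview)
Your proposal is correct and follows essentially the same route as the paper: use \sref{Theorem}{rank_from_dimension} to force the interpreted order to be scattered (you even sharpen this to rank $\le 1$ via \sref{Example}{rank0rank1}, which is fine but unnecessary), combine with \sref{Theorem}{models-class} to pin down order type $\NN$, and then realise the isomorphism as the predecessor-counting map made definable via \sref{Theorem}{unti}. The only cosmetic difference is that the paper invokes \sref{Theorem}{4r} solely for part~(b), proving (a) directly for the original interpretation, whereas you normalise at the outset; your extra bookkeeping paragraph about composing the definable isomorphisms is exactly the care the paper leaves implicit.
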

\begin{proof} Let us denote by $<_*$ the order relation given by the $\PrA$ definition of $<$ within $\mathfrak{A}$. Clearly $<_*$ is definable in $(\NN,+)$. Thus we have an interpretation of the order type of $\mathfrak{A}$ in $\PrA$. Hence by \sref{Theorem}{rank_from_dimension} the order type of $\mathfrak{A}$ is scattered. But from \sref{Theorem}{models-class} we know that the only case when the order type of a model of $\PrA$ is scattered is the case when it is exactly $\NN$. Thus $\mathfrak{A}$ is isomorphic to $(\NN,+)$. From \sref{Theorem}{4r} it follows that it is enough to show the definability of the isomorphism only in the case when the interpretation that gives us $\mathfrak{A}$ is a non-relative interpretation with absolute equality.

It is easy to see that, the isomorphism $f$ from $\mathfrak{A}$ to $(\NN,+)$ is the function $f\colon x\longmapsto |\{y\in\mathbb{N}\mid y<_*x\}|$. Now we use counting quantifier to express the function:

\begin{gather}
f(a)=b \iff (\mathbb{N},+)\models \exists^{=b}z \;(z<_*a)
\end{gather}

Now apply \sref{Theorem}{unti} and see that $f$ is definable in $(\mathbb{N},+)$.
\end{proof}

\begin{theorem} Theory $\PrA$ is not one-dimensionally interpretable in any of its finitely axiomatizable subtheories.

\end{theorem}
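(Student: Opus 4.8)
The plan is to derive a contradiction by combining \sref{Theorem}{order} with the reduction noted by J.~Zoethout~\cite{jetze}. Suppose, towards a contradiction, that some finitely axiomatizable subtheory $\mathbf{T}$ of $\PrA$ one-dimensionally interprets $\PrA$ via a translation $\iota$. Conjoining axioms, I may take $\mathbf{T}$ to be axiomatized by a single $\Lc^{-}$-sentence $\tau$; since $\mathbf{T}$ is a subtheory of $\PrA=\Th(\NN,+)$ we have $\PrA\vdash\tau$, that is, $(\NN,+)\models\mathbf{T}$.

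First I would extract a single true sentence that rigidifies $\iota$. As $(\NN,+)\models\mathbf{T}$, the translation $\iota$ interprets some model $\mathfrak{A}\models\PrA$ in $(\NN,+)$, so \sref{Theorem}{order} supplies an $\Lc^{-}$-formula $I$ defining in $(\NN,+)$ an isomorphism $\mathfrak{A}\to(\NN,+)$; equivalently, $I$ is a definition of an isomorphism between $\iota$ and the identity interpretation $\ids$ as interpretations of $\PrA$ in $(\NN,+)$. By the equivalence recorded at the end of Section~3, $I$ is then a definition of an isomorphism between $\iota$ and $\ids$ as interpretations of $\PrA$ in $\PrA$: in every model $\mathfrak{M}\models\PrA$ the formula $I$ defines an isomorphism $\mathfrak{M}_\iota\to\mathfrak{M}$, where $\mathfrak{M}_\iota$ is the structure $\iota$ interprets in $\mathfrak{M}$. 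Hence the single $\Lc^{-}$-sentence $\mathrm{Iso}_I$ expressing ``$I$ is the graph of an isomorphism from the $\iota$-interpreted structure onto the ambient structure'' (i.e.\ $I$ relates every element of $\mathit{Dom}_\iota$ to a unique ambient element in an $\mathit{Pred}_{\iota,=}$-invariant, bijective way compatible with $+$) is a theorem of $\PrA$.

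Next I would run the reduction. Let $\mathbf{T}'$ be the theory axiomatized by the single sentence $\tau\land\mathrm{Iso}_I$; as both conjuncts are theorems of $\PrA$, $\mathbf{T}'$ is again a finitely axiomatizable subtheory of $\PrA$. The key claim is that $\mathbf{T}'$ proves every theorem of $\PrA$. Fix an $\Lc^{-}$-sentence $\sigma$ with $\PrA\vdash\sigma$. Since $\mathbf{T}'\vdash\tau$, every model of $\mathbf{T}'$ is a model of $\mathbf{T}$, so $\iota$ interprets $\PrA$ in $\mathbf{T}'$ and therefore $\mathbf{T}'\vdash\sigma^{\iota}$. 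Moreover, in any $\mathfrak{M}\models\mathbf{T}'$ the formula $I$ defines an isomorphism $\mathfrak{M}_\iota\to\mathfrak{M}$ (because $\mathbf{T}'\vdash\mathrm{Iso}_I$), so from the translation property $\mathfrak{M}_\iota\models\sigma\iff\mathfrak{M}\models\sigma^{\iota}$ we get $\mathfrak{M}\models\sigma\iff\mathfrak{M}\models\sigma^{\iota}$; as $\mathfrak{M}$ was arbitrary, $\mathbf{T}'\vdash\sigma\leftrightarrow\sigma^{\iota}$, whence $\mathbf{T}'\vdash\sigma$. Thus the finitely axiomatizable theory $\mathbf{T}'$ has exactly the theorems of $\PrA$, so $\PrA$ is finitely axiomatizable --- contradicting the classical fact that it is not.

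The genuine content of the argument is all in \sref{Theorem}{order}; once the isomorphism formula $I$ is available the remainder is soft, and, notably, it uses no formalized consistency statement (which $\PrA$ cannot express), replacing the Gödel-style argument available for reflexive theories. The only points requiring care are the precise shape of $\mathrm{Iso}_I$ --- totality and single-valuedness of $I$ modulo $\mathit{Pred}_{\iota,=}$, surjectivity and injectivity of the induced map, and compatibility with the $\iota$-graph of $+$ --- and the routine verification, unwinding Definition~\ref{interpretation} and the notion of a definition of an isomorphism, that $\mathrm{Iso}_I$ holds in $(\NN,+)$ for the $I$ given by \sref{Theorem}{order}; applying \sref{Theorem}{4r} first to assume $\iota$ non-relative with absolute equality lightens this bookkeeping.
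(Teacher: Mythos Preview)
Your argument is correct and follows essentially the same route as the paper's proof: use \sref{Theorem}{order} to obtain a definable isomorphism $I$ in $(\NN,+)$, adjoin the single sentence $\mathrm{Iso}_I$ to the finite axiom $\tau$ to form $\mathbf{T}'$, and observe that any model of $\mathbf{T}'$ is isomorphic (via $I$) to its internal $\iota$-model of $\PrA$, forcing $\mathbf{T}'=\PrA$ and contradicting non-finite-axiomatizability. Your detour through the end-of-Section~3 equivalence (lifting $I$ to all models of $\PrA$) is correct but unnecessary---it suffices that $\mathrm{Iso}_I$ holds in $(\NN,+)$, hence lies in $\Th(\NN,+)=\PrA$; the real work is done by the axiom $\mathrm{Iso}_I$ inside $\mathbf{T}'$, exactly as in the paper.
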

\begin{proof} Assume $\iota$ is an one-dimensional interpretation of $\PrA$ in some  finitely axiomatizable subtheory $\mathrm{T}$ of $\PrA$. In the standard model $(\NN,+)$ the interpretation $\iota$ will give us a model $\mathfrak{A}$ for which there is a definable isomorphism $f$ with $(\NN,+)$. Now let us consider theory $\mathrm{T}'$ that consists of $\mathrm{T}$ and the statement that the definition of $f$ gives an isomorphism between (internal) natural numbers and the structure given by $\iota$. Clearly $\mathrm{T}'$ is finitely axiomatizable and true in $(\NN,+)$, and hence is subtheory of $\PrA$. But now note that $\mathrm{T}'$ proves that if something was true in the internal structure given by $\iota$, it is true. And since $\mathrm{T}'$ proved any axiom of $\PrA$ in the internal structure given by $\iota$, the theory $\mathrm{T}'$ proves every axiom of $\PrA$. Thus $\mathrm{T}'$ coincides with $\PrA$. But it is known that $\PrA$ is not finitely axiomatizable, contradiction.
\end{proof}

\section{Multi-Dimensional Self-Interpretations}

We already know that the only linear orders that it is possible to interpret in $(\NN,+)$ (even by multi-dimensional interpretations) are scattered linear orders. And we could use this to prove the analogue of  \sref{Theorem}{1a}(a) for multi-dimensional interpretations by the same reasoning as we have used for \sref{Theorem}{1a}(a).

However, the only way any interpretation can be isomorphic to trivial in a multi-dimensional case is by having a one-dimensional set as its domain and from \sref{Theorem}{1a} it follows that all interpretations of $\PrA$ in $(\mathbb{N},+)$ that have one-dimensional domain are definably isomorphic to $(\mathbb{N},+)$. Thus in order to prove the analogue of \sref{Theorem}{1a}(b) for multi-dimensional interpretations one should in fact show that the domain of any interpretation of $\PrA$ in $(\NN,+)$ should be one-dimensional set.


In the section we will give some partial results about multi-dimensional self-interpretations of $\PrA$.

{\em Cantor polynomials} are quadratic polynomials that define a bijection between $\NN^2$ and $\NN:$
\begin{gather}
C_1(x,y)=C_2(y,x)=\frac{1}{2}(x+y)^2+\frac{1}{2}(x+3y).\label{cantorps}
\end{gather}

The bijections $C_1$ and $C_2$ are the isomorphism of $(\NN^2,\prec_1)$ and $(\NN,<)$ and the isomorphism of $(\NN^2,\prec_2)$ and $(\NN,<)$, where
\begin{gather*}
  (a_1,a_2)\prec_1(b_1,b_2)\stackrel{\mbox{\footnotesize \textrm{def}}}{\iff}(a_2<b_2\wedge{a_1+a_2=b_1+b_2})\vee(a_1+a_2<b_1+b_2),\\
  (a_1,a_2)\prec_2(b_1,b_2)\stackrel{\mbox{\footnotesize \textrm{def}}}{\iff}(a_2>b_2\wedge{a_1+a_2=b_1+b_2})\vee(a_1+a_2<b_1+b_2).
\end{gather*}
Note that both $\prec_1$ and $\prec_2$ are definable in $(\NN,+)$. The following theorem show that this interpretations of $(\NN,<)$ could not be extended to interpretations of $(\NN,x\mapsto sx)$, for some $s$ and thus shows that this interpretations could not be extended to interpretations of $(\NN,+)$.

\begin{theorem}\label{en}
 Let $s$ be a natural number that is not a square and $i$ be either $1$ or $2$. Let us denote by $f\colon \NN^2\to \NN^2$ the function $f(\overline{a})=C_i^{-1}(s\cdot C_i(\overline{a}))$, i.e. the preimage of the function $x\mapsto s\cdot x$ under the bijection $C_i\colon \NN^2\to \NN$. Then the function $f$ is not definable in $(\NN,+)$.
\end{theorem}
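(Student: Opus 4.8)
The plan is to assume for a contradiction that $f$ is definable in $(\NN,+)$ and to derive a conflict with \sref{Corollary}{irration}. The guiding observation is that the Cantor polynomial $C_i$ sends a pair $\overline{a}=(a_1,a_2)$ to a natural number of size $\tfrac12(a_1+a_2)^2$ up to a lower-order term; consequently $C_i^{-1}(s\cdot C_i(\overline{a}))$ scales the coordinate sum $a_1+a_2$ by the factor $\sqrt{s}$ up to a bounded error. Since $s$ is not a perfect square, $\sqrt{s}$ is irrational, and such growth is incompatible with $f$ being piecewise linear.

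First I would record the arithmetic of the diagonal enumeration. Writing $n=a_1+a_2$, a direct computation shows $C_1(a_1,a_2)=\tfrac{n^2+n}{2}+a_2$ and, by symmetry, $C_2(a_1,a_2)=\tfrac{n^2+n}{2}+a_1$; hence, as $\overline{a}$ ranges over the pairs with coordinate sum $n$, the value $C_i(\overline{a})$ runs through the consecutive integers $\tfrac{n^2+n}{2},\ldots,\tfrac{n^2+3n}{2}$. From this I extract two uniform estimates: (i) for every $\overline{a}$ with coordinate sum $n\ge1$ we have $\tfrac12 n^2\le C_i(\overline{a})\le\tfrac12 n^2+\tfrac32 n$; and (ii) for every $M\in\NN$, if $n'$ is the coordinate sum of $C_i^{-1}(M)$, then $(n')^2\le 2M\le (n'+2)^2$, i.e. $\sqrt{2M}-2\le n'\le\sqrt{2M}$.

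Next I would combine (i) and (ii). Let $g\colon\NN^2\to\NN$ send $\overline{a}$ to the coordinate sum of $f(\overline{a})=C_i^{-1}(s\cdot C_i(\overline{a}))$, fix $\overline{a}$ with coordinate sum $N\ge1$, and put $M=s\cdot C_i(\overline{a})$. By (i), $sN^2\le 2M\le sN^2+3sN$, whence (using $\sqrt{1+t}\le 1+\tfrac t2$) $\sqrt{s}\,N\le\sqrt{2M}\le\sqrt{s}\,N+2\sqrt{s}$; combining with (ii) gives
\[
\sqrt{s}\,N-2\ \le\ g(\overline{a})\ \le\ \sqrt{s}\,N+2\sqrt{s}\qquad\text{for all }\overline{a}\text{ with }a_1+a_2=N\ge1.
\]
Now if $f$ is definable then so is $g$, its graph being $\{(\overline{a},c)\mid\exists b_1\exists b_2\,((b_1,b_2)=f(\overline{a})\wedge c=b_1+b_2)\}$; and then so is $h(x):=g(x,0)$, the restriction of $g$ to the definable set $\{(x,0)\mid x\in\NN\}$. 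Together with $g(0,0)=0$, the displayed bound yields $\sqrt{s}\,x-3<h(x)<\sqrt{s}\,x+2\sqrt{s}+1$ for all $x\in\NN$, so that $h$ lies strictly between two linear functions of the irrational slope $\sqrt{s}$; by \sref{Corollary}{irration}, $h$ is not definable, contradicting the definability of $h$ just established.

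The main obstacle is the second step: proving the two uniform estimates on $C_i$ and $C_i^{-1}$ with error terms controlled uniformly in $\overline{a}$ and in $M$; everything afterwards is formal. As an alternative to the final appeal one could invoke \sref{Theorem}{jetz} directly: a piecewise linear $g$ would agree with an affine function with rational coefficients on some $2$-dimensional fundamental sublattice of $\NN^2$ (by \sref{Corollary}{sublattice}), and evaluating along a generating vector $\overline{v}$ with positive coordinate sum forces $\sqrt{s}$ to equal a rational multiple of a rational number, which is impossible.
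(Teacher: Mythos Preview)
Your proposal is correct and follows essentially the same route as the paper: both arguments assume $f$ is definable, pass to the one-variable function $h(x)$ equal to the coordinate sum of $f(x,0)$, show via the explicit form $C_i(a_1,a_2)=\tfrac{n^2+n}{2}+a_j$ that $h(x)$ is trapped between two linear functions of slope $\sqrt{s}$, and then invoke \sref{Corollary}{irration}. Your write-up is in fact more careful with the estimates than the paper's (which contains a few typos in the chain of inequalities), and your suggested alternative via \sref{Theorem}{jetz} is a minor variant of the same idea.
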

\begin{proof} Since the cases of $i=1$ and $i=2$ are essentially the same, let us consider just the case of $i=1$. Suppose the contrary: there is an $\Lc^{-}$-formula $F(x_1,x_2,y_1,y_2)$ which defines the graph of $f$:
$$(\NN,+)\models F(a_1,a_2,b_1,b_2)\iff f(a_1,a_2)=(b_1,b_2), \mbox{for all $a_1,a_2,b_1,b_2\in \NN$.}$$
Then  the following function $h(x):\NN\ra\NN$  is also definable:
\begin{gather}
  h(a)=b\stackrel{\mbox{\footnotesize \textrm{def}}}{\iff}\exists c,d ( f(a,0)=(c,d)\land b=c+d).
\end{gather}
Now it is easy to see that the following inequalities holds for all $a\in\NN$:
\begin{gather*}
  C_1(h(a),0)\le s\cdot C_1(a,0)<C_1(h(a)+1,0)\Ra\\
  \frac{h(a)(h(a)+1)}{2}\le \frac{sa(a+1)}{2}<\frac{(h(a)+1)(h(a)+2)}{2}\Ra\\
  y^2<S(x+1)^2\mbox{ and }Sx^<(y+2)^2 \Ra\\
  \sqrt{S}x-2<y<\sqrt{S}x+\sqrt{S}.
\end{gather*}

We conclude that a Presburger-definable function $h(x)$ is bounded both from above and below with linear functions of the same irrational slope. Contradiction with \sref{Corollary}{irration}.\qed
\end{proof}



We conjecture the following general fact holds:
\begin{hyp}\label{ba}
For any (multi-dimensional) interpretation $\iota$ of $\PrA$ in the model $(\NN,+)$ there is a definable isomorphism with the trivial interpretation of $(\NN,+)$ in $(\NN,+)$.
\end{hyp}

%
%


The following theorem is a slight modification of the theorem by G.R.~Blakley~\cite{blakley}.
\begin{theorem}\label{bash}
Let $A$ be a $d\times n$ matrix of integer numbers, function $\varphi_A\colon\mathbb{Z}^d\ra \NN\cup \{\aleph_0\}$ is defined as follows:

\begin{center}
$\varphi_A(u)\eqdef|\{\overline{\lambda}=(\lambda_1,\ldots,\lambda_n)\in\NN^n\mid A\lambda=u\}|.$
\end{center}

Then if the values of $\varphi_A$ are always finite, the function $\varphi_A$ is a piecewise polynomial function of a degree $\le n-\mathrm{rk}(A)$.
\end{theorem}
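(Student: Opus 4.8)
The plan is to reduce the statement to tools already available — the structure theorem for semilinear/definable sets (\sref{Theorem}{fund}) and elementary facts about piecewise linear functions — via an induction on $n$ that mirrors Blakley's computation \cite{blakley}, reading the degree bound off from the dimension of the fibers; throughout one passes between $\ZZ^d$ and $\NN^{2d}$ in the routine way so that the cited theorems apply.

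First I would unpack the finiteness hypothesis. By a Farkas-type alternative (Gordan's theorem), either $\ker A$ contains a nonzero vector of $\RR^n_{\ge 0}$, in which case $A(\lambda+tv)=A\lambda$ makes every nonempty fiber infinite, or there is an integer vector $c\in\ZZ^d$ with $w_j:=c^{\top}a_j>0$ for every column $a_j$ of $A$. The finiteness hypothesis forces the second alternative, and then any $\lambda\in\NN^n$ with $A\lambda=u$ satisfies $w_j\lambda_j\le c^{\top}u$, so all coordinates of all solutions are bounded by the Presburger-definable piecewise linear function $M(u)=\lfloor (c^{\top}u)/\min_j w_j\rfloor$. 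The same dichotomy, applied to any submatrix $A'$ obtained by deleting columns, is inherited from $A$ (extend a kernel vector of $A'$ by zeros), so the finiteness hypothesis passes to all such $A'$.

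Now I induct on $n$. If $n=\mathrm{rk}(A)$, the system $A\lambda=u$ has at most one rational solution, so $\varphi_A$ takes only the values $0$ and $1$, with support $\{A\lambda:\lambda\in\NN^n\}$ semilinear; by \sref{Theorem}{fund} this support is a disjoint union of fundamental lattices, so $\varphi_A$ is piecewise constant, i.e. piecewise polynomial of degree $0=n-\mathrm{rk}(A)$. If $r:=\mathrm{rk}(A)<n$, then (relabelling) the last column $a_n$ lies in the span of the others: a maximal independent set of columns has size $r<n$, and any column outside it is a combination of that set. Let $A'$ be $A$ with $a_n$ removed; then $\mathrm{rk}(A')=r$, $\varphi_{A'}$ satisfies the finiteness hypothesis, and by the induction hypothesis $\varphi_{A'}$ is piecewise polynomial of degree $\le (n-1)-r$. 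Splitting solutions by the value $k=\lambda_n$ gives
\[
\varphi_A(u)=\sum_{k=0}^{M(u)}\varphi_{A'}(u-k a_n),
\]
a sum in which only finitely many terms, cut off by the box estimate, are nonzero.

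It remains — and this is the crux — to see that the right-hand side is piecewise polynomial in $u$ of degree $\le n-r$. The map $(u,k)\mapsto\varphi_{A'}(u-k a_n)$ is piecewise polynomial of degree $\le n-1-r$ in $(u,k)$, its pieces being the (semilinear, hence fundamental-lattice-decomposable by \sref{Theorem}{fund}) preimages of the pieces of $\varphi_{A'}$ under the linear map $(u,k)\mapsto u-k a_n$. Fixing $u$ in one fundamental lattice, the set of $k\in[0,M(u)]$ landing in a given piece is a finite union of arithmetic progressions whose first terms, common differences and lengths vary piecewise linearly with $u$; so $[0,M(u)]$ partitions into boundedly many subintervals with piecewise-linear-in-$u$ endpoints, on each of which the summand agrees with a single polynomial $Q(u,k)$ of total degree $\le n-1-r$. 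Since $\sum_{k=0}^{m}k^{j}$ is a polynomial of degree $j+1$ in $m$, each inner sum is a polynomial in $u$ and in these endpoints of total degree $\le n-r$; substituting the piecewise-linear endpoints and adding the boundedly many contributions yields a piecewise polynomial function of degree $\le n-r=n-\mathrm{rk}(A)$. The main obstacle is exactly this last step — upgrading quasi-polynomial behaviour to genuine polynomiality on fundamental lattices while tracking, uniformly in $u$, how the active piece of $\varphi_{A'}$ moves as $k$ sweeps $[0,M(u)]$; this bookkeeping is precisely what Blakley's argument \cite{blakley} supplies, so a shorter alternative is to invoke \cite{blakley} for the piecewise structure and add only the degree bound, which is immediate from $\dim\{\lambda\in\RR^n_{\ge 0}:A\lambda=u\}\le n-\mathrm{rk}(A)$, together with the remark that semilinear pieces can be refined to fundamental lattices by \sref{Theorem}{fund}.
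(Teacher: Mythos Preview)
Your approach is correct but genuinely different from the paper's. You argue by induction on the number of columns, peeling off a dependent column and writing $\varphi_A(u)=\sum_{k=0}^{M(u)}\varphi_{A'}(u-k a_n)$, then reading off the degree bound from Faulhaber-type summation; this is indeed Blakley's original route, and your degree accounting (total degree $\le n-1-r$ in $(u,k)$, plus one from the summation, plus linear substitution for the endpoints) is right. The paper instead outsources the piecewise-polynomial structure to Sturmfels' vector-partition-function theorem \cite{sturmfels} and then proves the degree bound by a separate contradiction argument: restrict to a lattice where the polynomial allegedly has degree $>n-\mathrm{rk}(A)$, choose a direction $\overline{d}$ so that $Q(l)=\varphi_A(\overline{v}+l\overline{d})$ has that degree, and bound $\varphi_A(\overline{e}_l)$ above by a volume estimate on the $(n-\mathrm{rk}(A))$-dimensional polytope $\{A\overline{\lambda}=\overline{e}_l,\ \lambda_i\ge -1\}$, whose linear dimensions grow linearly in $l$.

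Two remarks on your write-up. First, the step you flag as the crux --- that the active piece of $\varphi_{A'}$ changes only along boundedly many arithmetic progressions in $k$, with piecewise-linear parameters uniformly in $u$ --- is exactly what requires care, and your deferral to \cite{blakley} for it is fair (the paper likewise defers its structural half to \cite{sturmfels}); but the cleaner way to phrase it inside this paper would be to pull back the fundamental-lattice decomposition of $\varphi_{A'}$ along the linear map $(u,k)\mapsto u-k a_n$ and then invoke \sref{Theorem}{fund} once more in $\NN^{2d+1}$, so that the pieces in $(u,k)$-space are themselves fundamental lattices and the section in $k$ is automatically controlled. Second, your ``shorter alternative'' at the end says the degree bound is \emph{immediate} from $\dim\{\lambda:A\lambda=u\}\le n-\mathrm{rk}(A)$; that is too quick --- the passage from fiber dimension to a bound on the degree of the counting polynomial is precisely what the paper's volume argument supplies, so if you take that shortcut you should spell out the growth estimate rather than assert it.
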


\begin{proof}
The existence of the fundamental lattices $C_1,\ldots,C_l$ on which $\varphi_A$ is polynomial follows from \cite[p.\,302]{sturmfels}. Now we prove that the $n-\mathrm{rk}(A)$ bound on the degree holds.

Let us consider any fundamental lattice $L$ with the initial vector $\overline{v}$ and generating vectors $\overline{s}_1,\ldots,\overline{s}_m$ such that the restriction of  $\varphi_A$ to $L$ is a polynomial. Now it is easy to see that we could find a polynomial $P(x_1,\ldots,x_m)$ such that $\varphi_A(\overline{v}+\eta_1\overline{s}_1+\ldots+\eta_m\overline{s}_m)=P(\eta_1,\ldots,\eta_m)$, for all $\eta_1,\ldots,\eta_m\in \NN$. Since the choice of $L$ was arbitrary, we could finish the proof of the theorem by showing that $P$ is of the degree $\le n-\mathrm{rk}(A)$. Let us assume for a contradiction that the degree of $P$ is $>n-\mathrm{rk}(A)$. Clearly, then there are $\theta_1,\ldots,\theta_m\in \NN$ such that the polynomial $Q(y)=P(\theta_1y,\ldots,\theta_m y)$ is of the degree $k>n-\mathrm{rk}(A)$. Now we consider the vector $\overline{d}=\eta_1\overline{s}_1+\ldots+\eta_m\overline{s}_m$ and the vectors $\overline{e}_l=\overline{v}+l\overline{d}$, for $l\in \mathbb{N}$. We have $\varphi_A(\overline{e}_l)=Q(l)$.

Let us now estimate the values of $\varphi_A(\overline{e}_l)$. The value $\varphi_A(\overline{e}_l)$ is the number of integer points in the polyhedron $H_l=\{(\lambda_1,\ldots,\lambda_n)=\overline{\lambda}\in \mathbb{R}^n\mid A\overline{\lambda}=\overline{e}_l\mbox{ and }\lambda_1,\ldots,\lambda_n\ge 0\}$. And now it is easy to see that $\varphi_A(\overline{e}_l)\le h_l/o$, where $o$ is the volume of $(n-\mathrm{rk}(A))$-dimensional sphere of the radius $1/2$ and $h_l$ is the $(n-\mathrm{rk}(A))$-dimensional volume of (at most) $(n-\mathrm{rk}(A))$-dimensional polyhedron $H_l'=\{(\lambda_1,\ldots,\lambda_n)=\overline{\lambda}\in \mathbb{R}^n\mid A\overline{\lambda}=\overline{e}_l\mbox{ and }\lambda_1,\ldots,\lambda_n\ge -1\}$. Now we just need to notice that the linear dimensions of the polyhedra $H_l'$ are bounded by a linear function of $l$ and hence the volumes $h_l$ are bounded by some polynomial of the degree $n-\mathrm{rk}(A)$, contradiction with the fact that the polynomial $Q(y)$ were of the degree $k>n-\mathrm{rk}(A)$.\qed
\end{proof}

Recall that a semilinear set is a finite union of lattices and that by result of \cite{ir} any semilinear set is a disjoint union of fundamental lattices. It is easy to see that the following lemma holds:

\begin{lemma} \label{pwpol_prop}
\begin{enumerate}
\item \label{pwpol_add} If $f,g\colon A\to \mathbb{Z}$ are piecewise polynomial functions of a degree $\le m$ then the function $h\colon A\to \mathbb{Z},\;h(\vv)=f(\vv)+g(\vv)$, is a piecewise polynomial function of a degree $\le m$;
\item \label{pwpol_rest} if $A\subseteq \mathbb{Z}^n$ is a semilinear set, $f\colon A\to \mathbb{Z}$ is a piecewise polynomial function of a degree $\le m$, and $B\subseteq A$ is $\PrA$-definable set then the restriction of $f$ to $B$ is a piecewise polynomial function of a  degree $\le m$;
\item \label{pwpol_lin} if $A\subseteq \mathbb{Z}^n$ is a semilinear set, $f\colon A\to \mathbb{Z}$ is a piecewise polynomial function of a degree $\le m$, and $F\colon \mathbb{Z}^n\to \mathbb{Z}^k$ is a linear operator, then the function $h\colon F(A)\to\mathbb{Z}^k$ is a piecewise polynomial function of a  degree $\le m$.
\end{enumerate}
\end{lemma}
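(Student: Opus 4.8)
The plan is to deduce all three statements from a single observation: one may always pass to a \emph{common refinement}, i.e.\ given finitely many decompositions of a semilinear set into fundamental lattices there is one decomposition into fundamental lattices refining all of them. This holds because the intersection of two fundamental lattices — more generally of any two semilinear sets — is again semilinear: over $\NN^n$ this is immediate from \sref{Theorem}{fund} together with the closure of $\PrA$-definable sets under Boolean operations, and over $\mathbb{Z}^n$ one reduces to $\NN^n$ by splitting into the $2^n$ sign-orthants; in either case the cited theorem of Ito then turns the intersection back into a disjoint union of fundamental lattices. For the third part I would additionally use that the linear image $F(A)$ of a semilinear set is semilinear, which is the same kind of fact (it is a projection of a $\PrA$-definable set, and $\PrA$-definable sets are closed under projection). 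The only other ingredient is elementary: polynomials of degree $\le m$ in fixed variables are closed under sums and integer linear combinations, and restricting such a polynomial to a subset, or precomposing it with an affine map, keeps the degree $\le m$.

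Granting this, the first part is immediate: fix decompositions $A=\bigsqcup_i C_i$ and $A=\bigsqcup_j D_j$ with $f$ agreeing on $C_i$ with a polynomial $P_i$ of degree $\le m$ and $g$ agreeing on $D_j$ with $Q_j$ of degree $\le m$, refine the common refinement $\{C_i\cap D_j\}$ into fundamental lattices, and observe that on each resulting lattice $h=f+g$ agrees with the corresponding $P_i+Q_j$, of degree $\le m$. For the second part, keep $A=\bigsqcup_i C_i$ and the $P_i$, write $B=\bigsqcup_i(B\cap C_i)$, note that each $B\cap C_i$ is the intersection of a $\PrA$-definable set with a fundamental lattice and hence semilinear, decompose it into fundamental lattices, and observe that on each piece $f\upharpoonright B$ is a restriction of some $P_i$, hence of degree $\le m$.

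For the third part I would again start from $A=\bigsqcup_i C_i$ with $f$ agreeing on $C_i$ with $P_i$ of degree $\le m$. In the reading where $f$ is tuple-valued and $h=F\circ f$, every coordinate of $h$ is, on $C_i$, a fixed integer linear combination of coordinates of $P_i$, so the same decomposition works. In the reading where $F$ transports the domain and $h$ is the function on $F(A)$ sending $w$ to $f(v)$ for a preimage $v$ of $w$, I would replace $\{F(C_i)\}$ by a refinement of $F(A)$ into fundamental lattices $E_k$ with each $E_k\subseteq F(C_{i(k)})$, pick on each $E_k$ a rational affine section $w\mapsto v(w)$ of $F$ on the affine span of $E_k$, and conclude that $h(w)=P_{i(k)}(v(w))$ is a degree-$\le m$ polynomial precomposed with an affine map, hence of degree $\le m$. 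The step I expect to be the only real obstacle is well-definedness here: one has to check that $h$ is genuinely a function and that $P_i$ is constant along the $F$-fibres meeting $C_i$, so that $h(w)$ does not depend on the chosen section. Under whatever hypothesis makes $h$ a function — in the intended application $F$ is injective on $A$ — this is automatic and $v(w)$ is just the inverse of $F\upharpoonright E_k$; so the substance of the lemma is entirely the common-refinement move, with this bit of bookkeeping as the one point needing care.
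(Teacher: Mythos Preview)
The paper does not actually prove this lemma: it introduces it with ``It is easy to see that the following lemma holds'' and moves on. Your common-refinement argument is exactly the natural way to cash this out, and parts (1) and (2) go through precisely as you describe, using Ito's result (\sref{Theorem}{fund}) to turn each pairwise intersection back into a disjoint union of fundamental lattices.

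Your caution about part (3) is well placed: the statement as printed is not quite well-typed (note $f\colon A\to\mathbb{Z}$ but $h\colon F(A)\to\mathbb{Z}^k$), and the paper only ever invokes it in the proof of \sref{Lemma}{newlem} to pass from $\varphi_A$ to $\overline{b}\mapsto\varphi_A(\overline{b}-\overline{c}')$, i.e.\ a precomposition with a bijective affine change of variables. Under that intended reading --- $F$ injective on $A$, $h(F(\overline{v}))=f(\overline{v})$ --- your argument (push each $C_i$ forward by $F$, refine $F(A)=\bigcup_i F(C_i)$ into fundamental lattices, and precompose $P_i$ with the affine inverse of $F$ on each piece) is correct and gives degree $\le m$. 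You are right that without injectivity the ``function'' $h$ need not be well defined at all, so the only honest formulation is the one you isolate; this is not a gap in your proof but in the paper's statement.
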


We prove the lemma that generalizes the one-dimensional construction of the cardinality of sections.

\begin{lemma}\label{newlem}
Let $S\subseteq\NN^{n+m}$ be a definable set in $(\NN,+)$. For each vector $\overline{b}=(b_1,\ldots,b_m)\in \NN^m$ we define section $A\upharpoonright {\overline{b}}$ to be the set of all elements of $S$ of the form $(a_1,\ldots,a_n,b_1,\ldots,b_m)$. Suppose all sets $S\upharpoonright \overline{b}$ are finite. For each vector $\overline{a}\in\NN^n$. Consider the section cardinality function $f_S\colon\NN^m\ra\NN,\:f_S\colon\overline{a}\mapsto|S\upharpoonright \overline{b}|.$ Then $f_S$ is a piecewise polynomial function of a  degree $\le n$.
\end{lemma}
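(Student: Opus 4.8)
The plan is to peel $S$ apart into fundamental lattices and then, on each lattice, to recognise the section-cardinality function as a function of the form $\varphi_A$ from \sref{Theorem}{bash}. First, by \sref{Theorem}{fund} we may write $S=J_1\sqcup\cdots\sqcup J_k$ as a disjoint union of finitely many fundamental lattices $J_i\subseteq\NN^{n+m}$. Sections of a disjoint union are disjoint unions of the corresponding sections, so $f_S(\bv)=\sum_{i=1}^k|J_i\upharpoonright\bv|$, and each summand $\bv\mapsto|J_i\upharpoonright\bv|$ takes only finite values since $J_i\upharpoonright\bv\subseteq S\upharpoonright\bv$. By the additivity part of \sref{Lemma}{pwpol_prop} it is therefore enough to prove that for a single fundamental lattice $J\subseteq\NN^{n+m}$ the function $\bv\mapsto|J\upharpoonright\bv|$ is piecewise polynomial of degree $\le n$.

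So fix $J$ with initial vector $\cv$ and linearly independent generating vectors $\overline{p}_1\sco\overline{p}_r$, $r\le n+m$, and split every vector into its first $n$ and last $m$ coordinates, writing $\cv=(\cv',\cv'')$ and $\overline{p}_j=(\overline{p}_j',\overline{p}_j'')$. A point $\cv+\sum_j\lambda_j\overline{p}_j$ of $J$ lies in $J\upharpoonright\bv$ precisely when $\cv''+\sum_j\lambda_j\overline{p}_j''=\bv$; and since $\overline{p}_1\sco\overline{p}_r$ are linearly independent, the map $\overline{\lambda}\mapsto\cv+\sum_j\lambda_j\overline{p}_j$ is injective, so distinct solutions $\overline{\lambda}$ correspond to distinct points of the section. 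Letting $A$ be the $m\times r$ integer matrix with columns $\overline{p}_1''\sco\overline{p}_r''$, we get $|J\upharpoonright\bv|=|\{\overline{\lambda}\in\NN^r\mid A\overline{\lambda}=\bv-\cv''\}|=\varphi_A(\bv-\cv'')$ in the notation of \sref{Theorem}{bash}, and all these values are finite.

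The crucial numerical observation is the degree bound. The $(n+m)\times r$ matrix with columns $\overline{p}_1\sco\overline{p}_r$ has rank $r$, while its first $n$ rows span a space of dimension $\le n$; hence $r\le n+\mathrm{rk}(A)$, i.e. $r-\mathrm{rk}(A)\le n$. By \sref{Theorem}{bash}, $\varphi_A$ is then a piecewise polynomial function of degree $\le n$: there are finitely many fundamental lattices $C_1\sco C_l$ covering its domain such that on each $C_t$ the function $\varphi_A$ agrees with a rational polynomial $P_t$ of degree $\le n$. It remains to transport this decomposition along the translation $\bv\mapsto\bv-\cv''$: for each $t$ the set $\{\bv\in\NN^m\mid\bv-\cv''\in C_t\}$ is semilinear, hence by \cite{ir} a disjoint union of fundamental lattices, and on each such piece $|J\upharpoonright\bv|$ equals $P_t(\bv-\cv'')$, again a rational polynomial in $\bv$ of degree $\le n$. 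This exhibits $\bv\mapsto|J\upharpoonright\bv|$ as piecewise polynomial of degree $\le n$ and, together with the first paragraph, finishes the argument.

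I expect the only subtle point to be keeping the degree estimate sharp — extracting $r-\mathrm{rk}(A)\le n$ from the linear independence of the generators rather than settling for a weaker bound — together with the minor bookkeeping needed to pull the piecewise decomposition of $\varphi_A$ back into $\NN^m$ through the shift by $\cv''$; the rest is a direct assembly of \sref{Theorem}{fund}, \sref{Theorem}{bash}, and \sref{Lemma}{pwpol_prop}.
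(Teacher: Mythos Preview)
Your proposal is correct and follows essentially the same route as the paper: decompose $S$ into fundamental lattices via \sref{Theorem}{fund}, reduce to a single lattice by additivity (\sref{Lemma}{pwpol_prop}), identify the section-count on a lattice as $\varphi_A(\bv-\cv'')$, and invoke \sref{Theorem}{bash} together with the rank bound $r-\mathrm{rk}(A)\le n$. Your explicit row-rank justification of that bound and your hands-on transport of the piecewise decomposition through the shift are slightly more detailed than the paper's appeal to \sref{Lemma}{pwpol_prop}(\ref{pwpol_rest},\ref{pwpol_lin}), but the argument is the same.
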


\begin{proof} Let us first prove the theorem for the case when $S$ is a fundamental lattice with the initial vector $\cv$ and the generating vectors $\vv_1,\ldots,\vv_s\in \NN^{n+m}$. We consider the vectors $\cv',\vv_1',\ldots,\vv_s'\in \NN^m$ that consist of the last $m$ components of vectors $\cv,\vv_1,\ldots,\vv_s$, respectively. Clearly, for each $\bv\in \NN^m$, the value $f_S(\bv)=|A\upharpoonright \bv|$ is equal to the number of different $\overline{\lambda}=(\lambda_1,\ldots,\lambda_s)\in\NN^s$ such that $\lambda_1\vv_1'+\ldots+\lambda_s\vv_s'=\bv-\cv'$. Now we compose a matrix $A$ from the vectors $\vv_1',\ldots,\vv_s'$ and see that $f_S(\bv)=|\{\overline{\lambda}\in \NN^m\mid A \overline{\lambda}=\bv-\cv'\}|=\phi_{A}(\bv-\cv)$. Note that since $S$ was a fundamental lattice, $s-\mathrm{rk}(A)\le n$. Now we apply \sref{Theorem}{bash} and see that $\phi_{A}$ is a piecewise polynomial of a degree $\le n$. Now from \sref{Lemma}{pwpol_prop}(\ref{pwpol_rest}) and \sref{Lemma}{pwpol_prop}(\ref{pwpol_lin}) it follows that $f$ is piecewise polynomial of a degree $\le n$ too.

In the case of arbitrary definable $A$, we apply \sref{Theorem}{fund} and find fundamental lattices $J_1,\ldots,J_s$ such that $A=J_1\sqcup J_2\sqcup\ldots\sqcup J_s$. Now we see that for each $\overline{b}\in\NN^m$, we have $f_A(a)=f_{J_1}(a)+\ldots+f_{J_s}(a)$ and since we already know that all $f_{J_i}$ are piecewise polynomial of a degree $\le n$, by \sref{Lemma}{pwpol_prop}(\ref{pwpol_add}) the function $f_A$ is piecewise polynomial of a degree $\le n$.\qed

\end{proof}

\begin{theorem} Suppose a definable in $(\NN,+)$ binary relation $\prec$ on $\NN^n$ has the order type $\NN$. Then the order isomorphism between $(\NN^m,\prec)$ and $(\NN,<)$ is a piecewise polynomial function of a degree $\le n$.
\end{theorem}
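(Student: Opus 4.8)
The plan is to identify the order isomorphism with a section‑cardinality function and then quote \sref{Lemma}{newlem}. Write $g\colon\NN^n\to\NN$ for the unique order isomorphism between $(\NN^n,\prec)$ and $(\NN,<)$. Since $\prec$ well‑orders $\NN^n$ in order type $\NN$, every element of $\NN^n$ has only finitely many $\prec$‑predecessors, and $g(\overline{a})$ equals the rank of $\overline{a}$, that is, $g(\overline{a})=|\{\overline{b}\in\NN^n\mid\overline{b}\prec\overline{a}\}|$. (An order isomorphism between well‑orders is unique and is necessarily the rank function, so this identification is forced and needs no further argument.)

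Next I would introduce the set $S=\{(\overline{b},\overline{a})\in\NN^{2n}\mid\overline{b}\prec\overline{a}\}$. Since $\prec$ is definable in $(\NN,+)$, so is $S$. Now regard the first block of $n$ coordinates of an element of $S$ as the ``inner'' variables and the second block $\overline{a}$ as the parameters, in the sense of \sref{Lemma}{newlem}. Then the section $S\upharpoonright\overline{a}$ is precisely the set of $\prec$‑predecessors of $\overline{a}$ (tagged with $\overline{a}$), which is finite by the previous paragraph, and $|S\upharpoonright\overline{a}|=g(\overline{a})$. Hence the section‑cardinality function $f_S$ of \sref{Lemma}{newlem} coincides with $g$.

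Applying \sref{Lemma}{newlem} with inner dimension equal to $n$ then yields immediately that $f_S=g$ is a piecewise polynomial function of degree $\le n$, which is the assertion of the theorem.

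I do not expect a genuine obstacle here: once the isomorphism has been recognized as the predecessor‑counting map, the statement is a direct instance of \sref{Lemma}{newlem}, and the degree bound $\le n$ is exactly the bound supplied by that lemma for inner dimension $n$. The only points that warrant an explicit sentence are that the isomorphism is forced to be the rank function (hence the counting map) and that definability of the binary relation $\prec$ transfers to definability of $S$ in the concatenated tuple of variables; both are routine.
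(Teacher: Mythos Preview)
Your proposal is correct and follows essentially the same argument as the paper: identify the order isomorphism with the predecessor-counting map, view this as the section-cardinality function of the definable set $S=\{(\overline{b},\overline{a})\mid \overline{b}\prec\overline{a}\}\subseteq\NN^{2n}$, and apply \sref{Lemma}{newlem} with inner dimension $n$. Your write-up is in fact more explicit than the paper's (which is a one-line invocation of the lemma), but the content is identical.
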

\begin{proof}
We see that the order isomorphism is the function $f\colon \NN^m\to \NN$ given by

\begin{center}
$f(a_1,\ldots,a_n)=|\{(b_1,\ldots,b_n,a_1,\ldots,a_n)\mid (b_1,\ldots,b_n)\mathrel{R} (a_1,\ldots,a_n)\}|.$
\end{center}

By \sref{Lemma}{newlem} we see that $f$ is a piecewise polynomial function. \qed
\end{proof}






Fueter-P\'olya theorem \cite{fueterpolya,nathanson}  states that every quadratic polynomial that maps $\NN^2$ onto $\NN$ is one of two Cantor polynomials (\ref{cantorps}). If one would want to prove \sref{Conjecture}{ba} one of the possible approaches would be to give a classification of all piecewise polynomial bijections and then use the classification and a generalization of \sref{Theorem}{en} in order to show that no two-dimensional non-relative interpretation of $(\NN,<)$ in $(\NN,+)$ could be extended to an interpretation of $(\NN,+)$.

\section*{Acknowledgments}

The authors wish to thank Lev~Beklemishev for suggesting to study Visser's conjecture, number of discussions of the subject, and his useful comments on the paper.

\end{document}